\newenvironment{proof}[1][Proof]{\begin{trivlist}
		\item[\hskip \labelsep {\bfseries #1}]}{\end{trivlist}}
\newtheorem{theorem}{Theorem}[section]
\newtheorem{lemma}[theorem]{Lemma}
\DeclareMathOperator*{\argmax}{argmax}
\newcommand{\half}{\frac{1}{2}}
\newcommand{\lra}{\longrightarrow}
\newcommand{\bbE}{{ \mathbb{E}}}
\newcommand{\calC}{\mathcal{C}}
\newcommand{\calP}{\mathcal{P}}
\newcommand{\calU}{\mathcal{U}}
\newcommand{\kommentar}[1]{}
\def\Real{\hbox{I\kern-.1667em\hbox{R}}}
\def\Reals{\hbox{\scriptsize I\kern-.1667em\hbox{R}}}
\newcommand{\bsu}{\boldsymbol{u}}
\newcommand{\bsv}{\boldsymbol{v}}
\newcommand{\bsw}{\boldsymbol{w}}
\newcommand{\bss}{\boldsymbol{s}}
\newcommand{\bsV}{\boldsymbol{V}}
\newcommand{\bsX}{\boldsymbol{X}}
\newcommand{\bsB}{\boldsymbol{B}}
\newcommand{\bsS}{\boldsymbol{S}}
\newcommand{\diag}{\mbox{diag}}
\newcommand{\bsSig}{\boldsymbol{\Sigma}}
\newcommand{\bfX}{{\bf X}}
\newcommand{\sg}{\Sigma}
\newcommand{\bea}{\begin{eqnarray*}}
	\newcommand{\eea}{\end{eqnarray*}}
\newcommand{\bean}{\begin{eqnarray}}
\newcommand{\eean}{\end{eqnarray}}
\newcommand{\benu}{\begin{enumerate}}
\newcommand{\eenu}{\end{enumerate}}
\newcommand{\bbR}{\mathbb{R}}
\newcommand{\bbP}{\mathbb{P}}
\newcommand{\cU}{\mathcal{U}}
\newcommand{\bsg}{\boldsymbol{\sigma}}
\title{The Beta-Mixture  Shrinkage Prior for Sparse Covariances with Posterior Minimax Rates\footnote{The first and second authors contributed equally to this work.}}
\author[1]{Kyoungjae Lee}
\author[1]{Seongil Jo}
\author[2]{Jaeyong Lee}
\affil[1]{Department of Statistics, Inha University}
\affil[2]{Department of Statistics, Seoul National University}
\begin{document}

\maketitle

\begin{abstract}
Statistical inference for sparse covariance matrices is crucial to reveal dependence structure of large multivariate data sets, but lacks scalable and theoretically supported Bayesian methods.
In this paper, we propose beta-mixture shrinkage prior, computationally more efficient than the spike and slab prior,  for sparse covariance matrices   and establish its minimax optimality in high-dimensional settings.
The proposed prior consists of beta-mixture shrinkage and gamma priors for off-diagonal and diagonal entries, respectively.
To ensure positive definiteness of the resulting covariance matrix, we further restrict the support of the prior to a subspace of positive definite matrices. 
We obtain the posterior convergence rate of the induced posterior under the Frobenius norm  and establish a minimax lower bound for sparse covariance matrices. The class of sparse covariance matrices for the minimax lower bound  considered in this paper is controlled by the number of nonzero off-diagonal elements and has more  intuitive appeal than those appeared in the literature. 
The obtained posterior convergence rate  coincides with the minimax lower bound  unless the true covariance matrix is extremely sparse.
In the simulation study, we show that the proposed method is computationally more efficient than competitors, while achieving comparable performance.
Advantages of the shrinkage prior are demonstrated based on two real data sets.
\end{abstract}



\section{Introduction}

Suppose $X_1, \ldots, X_n$ are independent $p$-dimensional random vectors from $N_p(0, \Sigma)$, the $p$-dimensional normal distribution with mean $0 \in \bbR^p$ and covariance $\Sigma\in \bbR^{p\times p}$. The covariance matrix of  a random vector is a fundamental parameter that expresses the marginal dependence structure of $X$.  
It is a basis for many multivariate statistical methods such as principal component analysis,  factor analysis, discriminant analysis, and linear regression, to name just a few. 
In this paper, we consider the Bayesian inference of covariance matrices when the dimension of observations, $p$, tends to infinity as the sample size, $n$, gets larger.  
We assume that most of the off-diagonal entries of a covariance matrix are zero, i.e., only few  pairs of variables have significant marginal dependences.
We propose the beta-mixture  shrinkage prior for sparse covariance matrix. 
The proposed methodology is computationally fast and attains the minimax posterior convergence rate under the Frobenius norm when the true covariance is not extremely sparse.

There are vast and rich frequentist literature on high-dimensional sparse covariance estimation. 
Various thresholding estimators \citep{bickel2008covariance,rothman2009generalized,cai2011adaptive,cai2012optimal} and lasso-type procedures \citep{bien2011sparse} have been proposed for simultaneously learning marginal dependence structures and estimating covariance matrices.
Among them, \cite{cai2011adaptive} proposed an adaptive thresholding estimator and proved that it achieves the minimax convergence rate for sparse covariance matrices by showing that the obtained rate coincides with the minimax lower bound obtained in \cite{cai2012optimal}.


For Bayesian inference of sparse covariance, the $G$-inverse Wishart prior \citep{silva2009hidden} is often used. 
The normalizing constant of the $G$-inverse Wishart prior is analytically intractable and needs the Monte Carlo method for its evaluation, which makes the posterior computation infeasible even when $p$ is moderately large.  
\cite{khare2011wishart} introduced a broad class of priors including $G$-inverse Wishart prior as a special case.
They provided a blocked Gibbs sampler to obtain samples from the resulting posterior, but the priors were only applicable to decomposable covariance graph models.
\cite{wang15} proposed stochastic search structure learning (SSSL). He placed the spike-and-slab prior for the off-diagonal elements of $\Sigma$ and put a modified version of the product of Bernoulli prior on the sparsity structure of the covariance. The modification of the product of Bernoulli priors allows one to avoid the intense normalizing constant computation, but the natural interpretation of the Bernoulli prior is lost.

In addition to the computational difficulties of the posterior of the sparse covariance, the Bayesian literature lacks the asymptotic properties of the posteriors.    
\cite{lee2018optimal} showed that the inverse Wishart prior achieves the minimax posterior convergence rate for a unstructured covariance matrix under the spectral norm. 
The posterior convergence rate under the Frobenius norm was also derived.
However, they focused only on unstructured covariance matrices and used the inverse Wishart prior, which is not suitable for sparse covariance matrices.
Neither \cite{khare2011wishart} nor \cite{wang15} established the asymptotic properties of the posteriors for sparse covariances.
Up to our knowledge, asymptotic properties of the posteriors induced by the priors for sparse covariance matrices have not been investigated yet.

In this paper, to fill the gap in the literature, we develop a scalable Bayesian inference for sparse covariance matrices supported by theoretical properties of posteriors. 
We propose a continuous shrinkage prior for the sparse covariance matrices.
Especially, the beta-mixture prior and the gamma prior are used for off-diagonal and diagonal entries of covariance matrices, respectively.
To ensure the positive definiteness of the resulting covariance matrix, we further restrict the prior on a class of positive definite matrices.
A blocked Gibbs sampler is derived to obtain posterior samples.

The advantage of the proposed method are as follows.  
First, this is the first Bayesian method for sparse covariance matrices with optimal minimax rate unless the true covariance is extremely sparse. 
We show the posterior convergence rate of the proposed prior under the Frobenius norm (Theorem \ref{thm:conv_rate}). 
 We also derive a lower bound of the minimax rate for sparse covariance matrices (Theorem \ref{thm:lower}) with restriction only on the total number of nonzero off-diagonal entries, which differentiates the obtained result from the results in the literature assuming column-wise sparsity and has more intuitive appeal. 
These results show that the obtained posterior convergence rate is the minimax rate except extremely sparse cases. 
Second, the proposed method is computationally efficient.  
We  compare computational efficiency  of the shrinkage prior and the SSSL \citep{wang15},  and find that the proposed shrinkage prior has almost twice as many effective sample size as the SSSL.
This implies that the posterior sampling of the shrinkage prior  exhibits faster mixing than that of the SSSL.  

The paper is organized as follows. In Section \ref{sec:pre}, we describe the model, prior and the posterior computation. 
In Section \ref{sec:main}, we present the theoretical results including the asymptotic minimaxity.
The numerical studies and real data analysis are given in Section \ref{sec:simul}.
Concluding remarks are given in Section \ref{sec:disc}.

\section{Beta-Mixture Shrinkage Prior }\label{sec:pre}

\subsection{Notation}
Let $a_n$ and $b_n, n=1,2, \ldots$  be sequences of positive real numbers.  
We denote $a_n = O(b_n)$, or equivalently $a_n \lesssim b_n$, if $a_n/b_n \le C$ for some constant $C>0$.
We denote $a_n \asymp b_n$ if $a_n = O(b_n)$ and $b_n = O(a_n)$.
Furthermore, we denote $a_n = o(b_n)$, or equivalently $a_n \ll b_n$, if $a_n/b_n \lra 0$ as $n\to\infty$.
Let $\calC_p$ be the set of all $p\times p$ positive definite matrices. 
Let $A =(a_{ij})$ be a $p\times p$ matrix. 
We denote the minimum and maximum eigenvalues of $A$ by  $\lambda_{\min}(A)$ and $\lambda_{\max}(A)$,  respectively. 
The Frobenius norm of $A$ is defined by $\|A\|_F = (\sum_{i=1}^p \sum_{j=1}^p a_{ij}^2 )^{1/2}$.

\subsection{Prior   for sparse covariances}

Suppose we observe $n$ independent samples $\bfX_n = (X_1,\ldots, X_n)$ from the $p$-dimensional normal distribution:
\begin{equation}\label{model}
	X_i  \mid \sg \stackrel{iid}{\sim} N_p\left( 0 , \Sigma\right), \quad i = 1, \ldots, n,
\end{equation}
where $\sg \in \calC_p$.
We assume that the covariance matrix $\sg$ is $\ell_0$-sparse, i.e., most of off-diagonal entries of $\sg$ are  zero.
For Bayesian inference on $\sg$, we need to impose a prior distribution on a set of covariance matrices.
We first define a prior for $p\times p$ symmetric matrices and restrict it to the space of positive definite matrices.  
Let
\begin{eqnarray}
	\pi^u(\sigma_{jk} \mid \rho_{jk}) &= & N \Big(\sigma_{jk} \mid  0, \, \frac{\rho_{jk}}{1-\rho_{jk}} \tau_1^2 \Big) ,  \label{sig_jk} \\
	\pi^u(\rho_{jk}) & = & Beta(\rho_{jk} \mid a, b) ,\quad 1\le j < k  \le p ,  \label{rho_jk} \\
	\pi^u(\sigma_{jj}) & = & Gamma(\sigma_{jj} \mid c, d) , \quad j=1, \ldots, p  , \label{sig_jj}
\end{eqnarray}
for some positive constants $\tau_1, a, b, c, d$, where $Beta(a, b)$ is the beta distribution with parameters $a, b > 0$ and $gamma(c, d)$ is the beta distribution with shape parameter $c$ and rate parameter $d$.  The prior on symmetric matrix with positive diagonal elements is defined as 
\bean\label{sprior_init}
\pi^u(\Sigma) &=& \prod_{1\le j<k \le p} \pi^u(\sigma_{jk} \mid  \rho_{jk})\pi^u(\rho_{jk}) I(\sigma_{jk}= \sigma_{kj}) \prod_{j=1}^p \pi^u(\sigma_{jj}), 
\eean
where ``u'' stands for the unconstrained prior. 
Note that the marginal prior on $\sigma_{jk}$ is the half-Cauchy prior if we take $a=b=1/2$, which is one of the most popular shrinkage priors.

Other possible choices for the shrinkage prior of $\sigma_{jk}$ are  the horseshoe prior \citep{carvalho2010horseshoe}, the lasso prior \citep{park08},  the hyperlasso prior \citep{griffin11, griffin17} and the generalized double pareto (GDP) prior \citep{armagan13}. In this paper, we will focus on the half-Cauchy prior for the off-diagonal elements and $Gamma(1, \frac{\lambda}{2} ), \lambda > 0,$ and derive its theoretical properties. 

Now, we propose the shrinkage prior for sparse covariance matrices by restricting $\pi^u(\sg)$ to the subspace of positive definite matrices: 
\bean\label{sprior}
{\pi}( \Sigma ) &=& \frac{\pi^u(\Sigma) I (\Sigma \in \mathcal{U}(\tau) ) }{ \pi^u( \Sigma \in \mathcal{U}(\tau)  ) } ,
\eean
where 
\bea
\mathcal{U}(\tau) &=& \Big\{  \sg\in \calC_p:  \tau^{-1} \le \lambda_{\min}(\sg)\le \lambda_{\max}(\sg)\le \tau      \Big\} 
\eea
for some constant $\tau>1$.  
In this paper, we consider $\tau$ as a fixed constant to obtain desired asymptotic properties of posteriors.
However, in practice, one can use $\tau= \infty$, which results in $\calU(\tau) = \calC_p$.
Conditions on the hyperparameters will be specified in Section \ref{sec:main}, while practical suggestions will be given in Section \ref{sec:simul}.

\subsection{Comparison to the SSSL}

The shrinkage prior \eqref{sprior}  proposed in this paper and the SSSL proposed by \cite{wang15} use the gamma and exponential priors for the diagonal entries, $\sigma_{ii}, i=1,2, \ldots, p$,  of the covariance,  respectively. For the off-diagonal elements, $\sigma_{jk}, 1\leq  j \neq k \leq p$, \cite{wang15} used the continuous spike and slab prior, 
$$\pi^{u, W}(\sigma_{jk}) = (1-\pi) N(\sigma_{jk} \mid   0, \nu_0^2) + \pi N( \sigma_{jk}\mid 0, \nu_1^2) $$ for some constants $ 0<\nu_0 < \nu_1$ and $\pi \in (0,1)$, while we use the continuous beta-mixture shrinkage prior \eqref{sig_jk} and \eqref{rho_jk}.

In the spike and slab prior, the prior inclusion probability,  $\pi \in (0,1)$, reflects the prior belief  whether $\sigma_{jk}$ will be zero or not. 
Similarly to the beta-mixture  shrinkage prior \eqref{sprior}, \cite{wang15} proposed the  prior,  $\pi^{W}(\sigma_{jk})$, by restricting $\pi^{u, W}(\sigma_{jk})$ to the space of positive definite matrices.
Note that due to the unknown normalizing constant caused by the positive definiteness constraint, $\pi \in (0,1)$ is no longer the prior inclusion probability of the resulting prior $\pi^{W}(\sigma_{jk})$.

The main advantages of the beta-mixture prior over the SSSL are the theoretical guarantee and  computational efficiency. 
The proposed prior \eqref{sprior} achieves the minimax posterior convergence rate for sparse covariances under the Frobenius norm, which will be rigorously stated in Section \ref{sec:main}.
On the other hand, asymptotic properties of posteriors based on the SSSL have not been investigated yet.
Furthermore, based on the simulation studies in Section \ref{sec:simul}, we found that the finite sample performance of the proposed prior is comparable to that of the SSSL while achieving almost twice as many effective sample size. 



\subsection{Blocked Gibbs sampler}

We now provide a posterior sampling algorithm for our prior described in \eqref{sprior}. The algorithm is based on the blocked Gibbs sampler proposed by \cite{wang15}. To describe the algorithm,  as in Proposition 2 of \cite{wang15}, we consider the following partition of $\Sigma$, $\bsS = \bfX_n^T\bfX_n$ and $\bsV = (v_{jk}^2), ~v_{jk}^2 = v_{kj}^2 = \rho_{jk}\tau_1^2/(1-\rho_{jk})$ for $j < k$ and $v_{jk}^2 = 0$ for $ j = k$:
\begin{equation}\label{eq:partitions}
	\Sigma = \left(\begin{array}{cc}\Sigma_{11} & \bsg_{12} \\ \bsg_{12}^T & \sigma_{22} \end{array}\right), \quad 
	\bsS = \left(\begin{array}{cc}\bsS_{11} & \bss_{12} \\ \bss_{12}^T & s_{22} \end{array}\right), \quad
	\bsV = \left(\begin{array}{cc}\bsV_{11} & \bsv_{12} \\ \bsv_{12}^T & 0 \end{array}\right),
\end{equation}
where $\Sigma_{11} , \bsS_{11} , \bsV_{11}  \in \calC_{p-1}$, $\bsg_{12}, \bss_{12} ,\bsv_{12}  \in \bbR^{ (p-1)\times 1}$ and $\sigma_{22}, s_{22}  >0$,
and the change of variables:
\begin{equation}\label{eq:changev}
	\left(\bsg_{12}, \sigma_{22}\right) \rightarrow \left(\bsu = \bsg_{12}, v = \sigma_{22} - \bsg_{12}^T\Sigma_{11}^{-1}\bsg_{12}\right).	
\end{equation}
	
The posterior samples then are generated by iterating the following steps (for details, see the Appendix D):
\begin{itemize}
	\item For $\bsu$, 
	\bea
		\bsu \mid {\tt others} &\sim& \ N_{p-1} \left[\left\{\bsB + \diag(\bsv_{12}^{-1})\right\}^{-1}\bsw, \left\{\bsB + \diag(\bsv_{12}^{-1})\right\}^{-1}\right],
	\eea
	where $\bsB = \Sigma_{11}^{-1}\bsS_{11}\Sigma_{11}^{-1}v^{-1} + \lambda\Sigma_{11}^{-1}$ and $\bsw = \Sigma_{11}^{-1}\bss_{12}v^{-1}$.
	\item For $v$, 
	\bea
		v \mid {\tt others} &\sim& GIG \left(1 - n/2, \,\, \lambda, \,\, \bsu^T\Sigma_{11}^{-1}\bsS_{11}\Sigma_{11}^{-1}\bsu - 2s_{12}^T\Sigma_{11}^{-1}\bsu + s_{22}\right),
	\eea
	where $GIG(q, a, b)$ is the generalized inverse Gaussian distribution with the probability density function $f(x) \propto x^{q-1} e^{-(ax +b/x)/2} I(x>0)$.
	\item For $\rho_{jk} = 1 - 1/(1 + \phi_{jk})$, 
	\bea
	\psi_{jk} \mid {\tt others} &\sim& Gamma \left(a + b, \phi_{jk} + 1\right), \\
	\phi_{jk} \mid {\tt others} &\sim& GIG \left(a - 1/2, 2\psi_{jk}, \sigma_{jk}^2/\tau_1^2\right)  ,
	\eea
	where $Gamma(a,b)$ is the Gamma distribution the shape parameter $a$ and the rate parameter $b$.
\end{itemize}

\section{Posterior convergence rate}\label{sec:main}

In this section, we show that the beta-mixture shrinkage prior achieves the minimax rate under the Frobenius norm when $p = O(s_0)$ where $s_0$ is an upper bound for nonzero off-diagonal elements of the covariance matrix.  
Let $\sg_0 $ be the true covariance matrix.
For a given integer $0<s_0<p(p-1)$ and a real number $\tau_0 >1$, we define the parameter space 
\bean\label{para_sp}
\mathcal{U}(s_0, \tau_0)
&=& \Big\{ \Sigma \in \calC_p: |s(\Sigma)| \le s_0, \,\, \tau_0^{-1} \le \lambda_{\min}(\sg)\le \lambda_{\max}(\sg)\le \tau_0 \Big\} ,
\eean
where $|s(\sg)|$ is the number of nonzero off-diagonal entries in $\sg$.
To attain the desired asymptotic properties of posteriors, we introduce the following conditions.   

\begin{enumerate}[A1)]
\item[\bf (A1)] $\sg_0 \in \calU(s_0,\tau_0)$ for some integer $0<s_0<p(p-1)$ and constant $\tau_0 >1$. 
\item[\bf (A2)] $p \asymp n^{\beta}$ for some $0<\beta<1$.  
\item[\bf (A3)]  The hyperparameters satisfy $\tau   \ge \max(3, \tau_0)$, $\tau = O(1)$, $\lambda = O(1)$, 
$a=b=1/2$ and $\tau_1^2  \asymp 1 / (n p^4)$.
\end{enumerate}

Condition (A1) implies that the true covariance matrix is sparse and has eigenvalues bounded above as well as away from zero.  
The integer $s_0$ controls the sparsity of the true covariance matrix.
The bounded eigenvalue condition has been commonly used in high-dimensional matrix estimation literature including \cite{banerjee2015bayesian}, \cite{gao2015rate} and \cite{lee2019minimax}.
In this paper, the lower bound for the minimum eigenvalue is mainly used in Lemma \ref{KV.ineq} to convert $\|\sg_0^{-1} - \sg^{-1}\|_F$ to $\|\sg_0  - \sg \|_F$, while the upper bound for the maximum eigenvalue is required to ensure $\sg_0 \in \cU(\tau)$.

Condition (A2) says that the number of variables $p$ grows to infinity as $n\to\infty$, but at a slower rate than $n$.
In the literature, \cite{lam2009sparsistency} used a similar condition to obtain the convergence rates of  penalty estimators for  sparse  covariance matrices, and 
 \cite{liu2019empirical} used the same  condition to obtain the posterior convergence rate  for sparse precision matrices. 
This condition is inevitable to obtain the posterior convergence rate under the Frobenius norm if one uses the traditional techniques in \cite{ghosal2000convergence} which we use in this paper. 
In the seminal work of \cite{ghosal2000convergence}, they provided a sufficient condition for proving the posterior convergence rate for densities under the Hellinger metric, which is equivalent to the Frobenius norm for covariance matrices under the bounded eigenvalue condition (A1).
When the posterior is intractable, this is the standard way to find the posterior convergence rate.
One of necessary conditions in this result is that the posterior convergence rate should converge to zero as $n\to\infty$. Since the diagonal elements of the covariance are all nonzero, this condition requires the number of diagonal elements $p = o(n)$. This can be also seen from the minimax lower bound result, Theorem \ref{thm:lower}. Thus, if one use the techniques in \cite{ghosal2000convergence},  condition (A2) is required to prove Theorem \ref{thm:conv_rate}.

Condition (A3) gives a sufficient condition for hyperparameters to obtain the desired theoretical property of posteriors.
The choice $a=b=1/2$ implies that we use the half-Cauchy prior for the off-diagonal entries.
Note that $\tau_1^2$ is the global shrinkage parameter in \eqref{sig_jk}, thus condition (A3) means that the global shrinkage parameter should be sufficiently small.
This corresponds to assume a sufficiently small inclusion probability in spike and slab priors. See \cite{lee2019minimax} and \cite{martin2017empirical}.

For the asymptotic minimax rate of the shrinkage prior, we first show an upper bound of the minimax rate: Theorem \ref{thm:conv_rate} shows the posterior convergence rate of the proposed prior under the Frobenius norm.

\begin{theorem}\label{thm:conv_rate}
	Under model \eqref{model} and prior \eqref{sprior}, assume conditions (A1)--(A3) hold.
	If  $(p+s_0)\log p =o(n)$, as  $n\to\infty$ 
	\bea
	\pi \Big\{  \|\sg - \sg_0 \|_F^2 \ge M \frac{(p+s_0)\log p}{n}  \mid \bfX_n  \Big\}  &\lra& 0 ~ \text{in $\bbP_{0}$-probability}
	\eea
	for some large constant $M>0$. 
\end{theorem}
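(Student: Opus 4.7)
The strategy is to invoke the general posterior convergence rate theorem of \cite{ghosal2000convergence} with target rate $\epsilon_n^2 = (p+s_0)\log p/n$. Condition (A1) together with the restriction of the prior support to $\calU(\tau)$ ensures that on the relevant set both the Kullback--Leibler divergence and the squared Hellinger distance between $N_p(0,\sg_0)$ and $N_p(0,\sg)$ are comparable, up to constants depending only on $\tau$, to $\|\sg-\sg_0\|_F^2$; Lemma \ref{KV.ineq} supplies the conversion between norms on $\sg$ and $\sg^{-1}$. It then suffices to verify the three classical ingredients: (i) prior concentration $\pi(B_n^{KL}(\sg_0,\epsilon_n))\ge \exp(-C_1 n\epsilon_n^2)$; (ii) a sieve $\calF_n$ with $\pi(\calF_n^c)\le \exp(-C_2 n\epsilon_n^2)$; and (iii) the entropy bound $\log N(\epsilon_n,\calF_n,\|\cdot\|_F)\le C_3 n\epsilon_n^2$.

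For (ii)--(iii) I would take $\calF_n = \{\sg\in\calU(\tau): \#\{(j,k): j<k,\,|\sigma_{jk}|>\delta_n\} \le Ks_0\}$ with $\delta_n = n^{-1/2}$ and a large constant $K$. Conditioning on the sparsity pattern (at most $\binom{p(p-1)/2}{Ks_0}$ configurations) and covering the bounded entries in Frobenius norm yields $\log N(\epsilon_n,\calF_n,\|\cdot\|_F) = O(Ks_0\log p + (p+Ks_0)\log(1/\epsilon_n)) = O((p+s_0)\log p)$. The marginal prior on each $\sigma_{jk}$ induced by \eqref{sig_jk}--\eqref{rho_jk} with $a=b=1/2$ is Cauchy with scale $\tau_1\asymp (np^4)^{-1/2}$, so $\bbP(|\sigma_{jk}|>\delta_n)\lesssim \tau_1/\delta_n = O(p^{-2})$, and a binomial tail bound on the count of entries exceeding $\delta_n$ drives $\pi(\calF_n^c)$ below $\exp(-C_2 n\epsilon_n^2)$ once $K$ is large.

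The main obstacle is (i), which must simultaneously handle the $p$ diagonal entries, the $s_0$ truly nonzero off-diagonals, and the $\binom{p}{2}-s_0$ off-diagonal entries that are exactly zero in $\sg_0$. The diagonal contribution is routine, since the Gamma density is bounded below on a neighborhood of each $\sigma_{0,jj}$. For the nonzero off-diagonals I would exploit the scale-mixture representation by restricting $\rho_{jk}$ to a neighborhood of a value $\rho^\ast$ for which the conditional variance $\rho^\ast/(1-\rho^\ast)\tau_1^2$ is of order one; this neighborhood carries $\mathrm{Beta}(1/2,1/2)$ mass of order $\tau_1$, and conditionally the Gaussian density at the true $\sigma_{0,jk}$ is bounded below, giving a factor $(\tau_1\delta)^{s_0}$ which is of order $\exp(-O(s_0\log p))$ under (A2)--(A3).

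The delicate step is the zero off-diagonal block, where a naive entrywise bound fails because the Cauchy margins are too heavy. I would instead control $Y=\sum_{(j,k):\,\sigma_{0,jk}=0}\sigma_{jk}^2$ collectively by restricting every $\rho_{jk}$ in this block to $\rho_{jk}/(1-\rho_{jk})\le M^2$ for $M = \Theta(p^2/((p+s_0)\log p))$. This constraint holds with $\mathrm{Beta}(1/2,1/2)$ probability $(\tfrac{2}{\pi}\arctan M)^{O(p^2)} \ge \exp(-O((p+s_0)\log p))$, and conditionally $Y$ is stochastically dominated by $M^2\tau_1^2\chi^2_{O(p^2)}$, whose bulk lies below $\epsilon_n^2/2$ because the scaling $\tau_1^2\asymp 1/(np^4)$ in (A3) is calibrated so that $M^2\tau_1^2 p^2 = O(\epsilon_n^2)$. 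Multiplying the three contributions delivers the required $\exp(-C_1 n\epsilon_n^2)$ prior mass. The positive-definiteness restriction $I(\sg\in\calU(\tau))$ only inflates the density by the bounded factor $\pi^u(\sg\in\calU(\tau))^{-1}$, which stays bounded away from zero once $\tau\ge\tau_0$. Feeding (i)--(iii) into the \cite{ghosal2000convergence} theorem and translating the resulting Hellinger rate back to the Frobenius norm via Lemma \ref{KV.ineq} completes the argument.
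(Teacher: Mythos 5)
Your overall strategy mirrors the paper exactly: verify the three Ghosal--Ghosh--van der Vaart conditions with $\epsilon_n^2=(p+s_0)\log p/n$, use a sieve of approximately sparse matrices, convert Kullback--Leibler/Hellinger to Frobenius via the bounded-eigenvalue restriction, and exploit that the half-Cauchy marginal has tail $O(\tau_1/x)$ and central density of order $\tau_1$. So the skeleton is sound and in the paper's spirit. Two points deserve correction, one of which is a genuine gap.

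First, the claim that ``a naive entrywise bound fails for the zero block because the Cauchy margins are too heavy'' is not correct, and the paper shows exactly the opposite. Because (A3) forces $\tau_1\asymp (np^4\tau^2)^{-1/2}$, the marginal tail is $\pi^u(|\sigma_{jk}|>\delta)\lesssim \tau_1/\delta$; taking $\delta$ comparable to $\epsilon_n/p$ gives $\tau_1/\delta = O\bigl(p^{-1}(s_0\log p)^{-1/2}\bigr)$, so the product over $\binom{p}{2}$ zero entries is at least $\exp\{-O(p/\sqrt{s_0\log p})\}\ge \exp\{-O(p)\}$, comfortably above $\exp(-Cn\epsilon_n^2)$. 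This is precisely how Theorem~\ref{thm:KL} handles the zero block. Your collective control via the constrained $\chi^2_{O(p^2)}$ after truncating $\rho_{jk}/(1-\rho_{jk})\le M^2$ is a valid alternative (and the choice $M=\Theta(p^2/((p+s_0)\log p))$ does balance the two requirements in all regimes of $s_0$), but it is not needed and costs you the extra conditioning machinery. Also, the marginal here is the horseshoe, not literally Cauchy; only the $O(\tau_1/x)$ tail and $\Omega(\tau_1/x^2)$ density-lower-bound behaviors that you actually use are Cauchy-like, so your conclusions survive, but the label is misleading.

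Second, and more substantively, the remark that the normalizing constant $\pi^u(\Sigma\in\calU(\tau))^{-1}$ ``stays bounded'' is false and this matters for the sieve bound. As $p\to\infty$, $\pi^u(\calU(\tau))\to 0$; what one must show is that it does not decay faster than $\exp\{-O(p\log p)\}$, which is below the $\exp(-Cn\epsilon_n^2)$ budget. This requires a nontrivial argument (the paper's Lemma~\ref{lower.bound}, via Gershgorin circles plus the smallness of $\tau_1$). Without that lower bound, the step $\pi(\calP_n^c)\le \pi^u(\calP_n^c)/\pi^u(\calU(\tau))$ does not close: you bound $\pi^u(\calP_n^c)$ correctly, but the unknown normalization could in principle wipe out the exponential decay. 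Likewise, for the prior-mass condition you implicitly need that the small-Frobenius-ball set you construct lies inside $\calU(\tau)$ (so that the constrained prior dominates the unconstrained one there); the paper establishes this via Weyl's inequality, and it should be flagged in your argument as well. With these two items repaired your proof goes through, essentially along the paper's lines with a heavier but still correct treatment of the zero block.
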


The condition $(p+s_0) \log p = o(n)$ relates $p$ and $n$ to $s_0$, the number of  nonzero off-diagonal elements of the true covariance. 
\cite{banerjee2015bayesian} and \cite{liu2019empirical} used the same condition to obtain the posterior convergence rate for sparse precision matrices.

The next theorem shows a minimax lower bound for covariance matrices, which coincides with the posterior convergence rate in Theorem \ref{thm:conv_rate} when $p = O(s_0)$.
In general, $p = O(s_0)$ holds unless $\sg_0$ is extremely sparse and  it holds  when the true covariance matrix has an autoregressive structure with order $1$, $AR(1)$.

\begin{theorem}\label{thm:lower}
	For given positive integer $s_0$ and real number $\tau_0 >1$, assume model \eqref{model} with $\Sigma_0 \in  \mathcal{U}(s_0, \tau_0)$. 
	If $s_0^2 (\log p)^3 = O(p^2 n)$ and $s_0^2 = O(p^{3-\epsilon})$ for some small constant $\epsilon>0$, 
	\bea
	\inf_{\hat{\sg}} \sup_{\sg_0 \in \calU(s_0, \tau_0) } \bbE_0 \| \hat{\sg} - \sg_0 \|_F^2  &\gtrsim& \frac{s_0 \log p}{n} \, I(s_0 > 3p) + \frac{p}{n} .
	\eea
\end{theorem}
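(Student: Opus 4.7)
The plan is to split the claimed bound into its two additive pieces $p/n$ and $(s_0\log p/n)\,I(s_0>3p)$ and prove each separately via Fano's lemma. For each piece I will construct a finite subfamily $\{\sg_1,\ldots,\sg_N\}\subset \calU(s_0,\tau_0)$ that is $\epsilon$-separated in the Frobenius norm and satisfies $\mathrm{KL}(\bbP^n_{\sg_i}\|\bbP^n_{\sg_j})\le \tfrac{1}{8}\log N$ for all $i\ne j$, so that Fano's inequality yields $\inf_{\hat\sg}\sup_i \bbE_i\|\hat\sg-\sg_i\|_F^2\gtrsim \epsilon^2$, with $\epsilon^2$ matching the target rate.

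For the universal $p/n$ piece, I would apply the Varshamov--Gilbert lemma to extract binary vectors $b^{(1)},\ldots,b^{(N)}\in\{0,1\}^p$ with $\log N\gtrsim p$ and pairwise Hamming distance at least $p/4$, and set $\sg_i=I_p+\gamma\,\diag(b^{(i)})$ with $\gamma=c/\sqrt{n}$ for a small constant $c>0$. Each $\sg_i$ is diagonal, so $|s(\sg_i)|=0\le s_0$ holds trivially, and the eigenvalue bounds $\tau_0^{-1}\le 1\le 1+\gamma\le \tau_0$ hold for $c$ small. A second-order expansion of the Gaussian KL gives pairwise divergence of order $n\gamma^2 p\asymp p$, which is at most $\tfrac{1}{8}\log N$ for sufficiently small $c$, while $\|\sg_i-\sg_j\|_F^2\ge (p/4)\gamma^2\asymp p/n$, delivering the universal $p/n$ lower bound.

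For the sparse piece, assume $s_0>3p$. I would construct symmetric perturbations $B^{(1)},\ldots,B^{(N)}$ each supported on exactly $s_0$ off-diagonal entries (i.e.\ $s_0/2$ unordered pairs $\{j,k\}$ with $j<k$) with values in $\{-1,+1\}$, drawing on both the choice of support (out of $\binom{p(p-1)/2}{s_0/2}$ options) and the sign pattern; a variant of the Varshamov--Gilbert argument then yields a packing of size $N=\exp(c_1 s_0\log(p^2/s_0))$ with pairwise upper-triangular Hamming distance $\ge s_0/8$. The hypothesis $s_0^2=O(p^{3-\epsilon})$ forces $\log(p^2/s_0)\asymp \log p$, hence $\log N\asymp s_0\log p$. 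Setting $\sg_i=I_p+\gamma B^{(i)}$ with $\gamma\asymp\sqrt{\log p/n}$ produces $\|\sg_i-\sg_j\|_F^2\gtrsim \gamma^2 s_0\asymp s_0\log p/n$ and pairwise $n$-sample Gaussian KL of order $n\gamma^2 s_0\asymp s_0\log p$, fitting the Fano budget.

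The crux of the argument will be guaranteeing $\sg_i\in\calU(s_0,\tau_0)$ uniformly over the exponentially large packing, which requires $\gamma\|B^{(i)}\|_{\mathrm{op}}\le 1-\tau_0^{-1}$. Since the naive bound $\|B^{(i)}\|_{\mathrm{op}}\le \sqrt{s_0}$ is far too weak, I would employ a block-structured construction that distributes the $s_0$ nonzero entries across rows with bounded row degree of order $s_0/p$; by the Gershgorin disk theorem this yields $\|B^{(i)}\|_{\mathrm{op}}=O(s_0/p)$, and combined with $\gamma\asymp \sqrt{\log p/n}$ the positive-definiteness constraint reduces to $s_0^2\log p=O(p^2 n)$, which is implied by the hypothesis $s_0^2(\log p)^3=O(p^2 n)$ (the extra log factors absorbing losses in the refined packing step). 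The condition $s_0>3p$ surfaces naturally here: the block construction needs blocks of size $\ge 2$ to generate many configurations, and only when $s_0$ exceeds a fixed multiple of $p$ does the packing entropy $s_0\log(p^2/s_0)$ strictly dominate the baseline $p$ already captured by the diagonal construction; below this threshold the diagonal argument alone saturates the attainable lower bound.
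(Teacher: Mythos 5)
Your high-level two-piece decomposition is exactly right, and your diagonal construction for the universal $p/n$ term mirrors the paper's parameter space $B_2$ almost exactly (the paper uses Assouad's lemma plus a total-variation bound borrowed from Lee and Lee (2018), where you use Varshamov--Gilbert plus Fano; these are interchangeable here). The deeper divergence is in the sparse piece. The paper does not use Fano: it invokes Lemma 3 (a structured version of Assouad) and Lemma 8 of Cai and Zhou (2012), and the whole burden lands on controlling $\min_i\|\bar{\bbP}_{i,0}\wedge\bar{\bbP}_{i,1}\|$ via a second-moment (chi-squared) calculation, which is the content of Lemma~\ref{lem:Rdecomp} and is where the conditions $s_0^2(\log p)^3=O(p^2n)$ and $s_0^2=O(p^{3-\epsilon})$ actually bite. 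Your plan to control pairwise KL instead is a legitimate alternative in spirit, but it replaces, not reproduces, the paper's analytic work.

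The genuine gap in your proposal is the packing itself. You correctly identify that you need a family of $\exp(c\,s_0\log p)$ sparse symmetric sign matrices with pairwise Hamming separation $\gtrsim s_0$ \emph{and} uniformly bounded row/column degree $O(s_0/p)$, and you wave at a ``block-structured construction,'' but this is precisely the nontrivial combinatorial step. A plain Varshamov--Gilbert extraction from all $s_0$-sparse upper-triangular sign patterns gives the required cardinality and separation but offers no control on row sums: among exponentially many matrices, some will have rows of degree $\gg s_0/p$, destroying the Gershgorin bound and hence positive definiteness. The paper handles this by building the degree constraint directly into the hypothesis class, namely the condition $\max_{1\le i\le p}\sum_{m=1}^r\lambda_{mi}\le 2k$ in the definition of $\Lambda$, together with the row-indexed matrices $A_m(\lambda_m)$ that perturb one row/column at a time; one must then verify (as in Cai and Zhou) that this constrained set is still large enough, i.e.\ $|\Lambda_1(\lambda_{-1})|=\binom{p_{\lambda_{-1}}}{k}$ with $p_{\lambda_{-1}}\ge p/4-1$. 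Until you supply an analogous construction and count, the ``crux'' you flag remains unresolved, and the Fano budget $\log N\asymp s_0\log p$ is not actually established. Your heuristic for why $s_0>3p$ appears (``blocks of size $\ge2$'') is also off the mark: in the paper's argument the constant $3$ arises because the per-row sparsity $k\approx \lceil s_0/p\rceil/2-1$ must be strictly positive and because $\alpha r\gtrsim (1/2-p/s_0)\,s_0\log p/n$ requires $s_0>2p$ to be a positive multiple of $s_0\log p/n$, not from any block-size consideration.
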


\cite{cai2012optimal} proved that a modified thresholding estimator attains the minimax rate for sparse covariance matrices under the class of Bregman divergences.
They assumed sparsity for each column of the covariance matrix, which means that {\it each column} of $\sg_0$ has nonzero entries less than $s_0'$.
On the other hand, we assume that the nonzero entries of the $\sg_0$ is less than $s_0$.
Thus, our sparsity assumption on $\sg_0$ is much weaker than that of \cite{cai2012optimal}.
 Up to our knowledge, this is the first minimax lower bound result for sparse covariance matrices with restriction only on the total number of nonzero off-diagonal entries.
To establish the minimax rate, they assumed that $(s_0')^2 (\log p)^3 = O(n)$, which is roughly equivalent to $s_0^2 (\log p)^3 = O(p^2 n)$ in our notation.
It is easy to see that the minimax rate in \cite{cai2012optimal} coincides with the rate of the lower bound in Theorem \ref{thm:lower}.
Hence, Theorems \ref{thm:conv_rate} and \ref{thm:lower} imply that, even though we consider a larger parameter space than \cite{cai2012optimal}, the minimax rate is still unchanged and the proposed prior attains it.

\section{Simulation Study}\label{sec:simul}

\subsection{Synthetic data}
To assess the performance of our sparse covariance estimator, we carry out a simulation study using synthetic datasets generated from Gaussian distributions with zero means and the following two covariance structures $\Sigma = (\sigma_{jk})$.
\begin{itemize}
	\item[{\tt C1}.] Sparse covariance that mimics daily currency exchange rate return structure \citep{wang15}: 
	{\scriptsize
		$$
		\left(\begin{array}{rrrrrrrrrrrrr}
		0.239 & 0.117 & & & & & & 0.031 & & & & & \\
		0.117 & 1.554 & & & & & & & & & & & \\
		& & 0.362 & 0.002 & & & & & & & & \\
		& & 0.002 & 0.199 & 0.094 & & & & & & & \\
		& & & 0.094 & 0.349 & & & & & & & -0.036 \\
		& & & & & 0.295 & -0.229 & 0.002 & & & & \\
		& & & & & -0.229 & 0.715 & & & & & \\
		0.031 & & & & & 0.002 & & 0.164 & 0.112 & -0.028 & -0.008 & \\
		& & & & & & & 0.112 & 0.518 & -0.193 & -0.090 & \\
		& & & & & & & -0.028 & -0.193 & 0.379 & 0.167 & \\
		& & & & & & & -0.008 & -0.090 & 0.167 & 0.159 & \\
		& & & & -0.036 & & & & & & & 0.207
		\end{array}\right);
		$$
	} 
	\item[{\tt C2}.] Random structure: $\sigma_{jj} \sim Gamma(1,1), ~ j =1,2, \ldots, p$.  Sparse off-diagonal positions ($20\%$) are randomly selected.  Nonzero off-diagonal elements are generated from $Unif(0, \mu), ~ j \neq k$. 
\end{itemize}

In  case  {\tt C1},  following \cite{wang15} we take $p = 12$ and $n = 250$, and   in case  {\tt C2},  we consider $p = 50$ and $n = 50, 100$ and choose the range of  parameter  $\mu$ as $\{0.02, 0.1, 0.5, 1\}$ following  \cite{castillo20}.

We compute the root mean squared error ({\tt rmse}), $\|\hat{\Sigma} - \Sigma\|_F / p$, and the maximum norm ({\tt mnorm}), $\mbox{max}_{jk} |\hat{\sigma}_{jk} - \sigma_{jk}|$, and  report the average {\tt rmse} and {\tt mnorm}, and their standard errors over 50 replications. As competing methods, we consider the sample covariance (SampCov) and the SSSL estimator \citep{wang15}. 
We generate 5000 posterior samples after 5000 burn-in for  the proposed shrinkage model and the SSSL.

Tables \ref{tb:sim1} and \ref{tb:sim2} show the {\tt rmse} and {\tt mnorm} values for simulation models  with two covariance structures, {\tt C1} and {\tt C2}, respectively. 
Figure \ref{fig:ess1} renders the effective sample sizes (ESS),  equivalent  sample sizes when the independent sampling is done, of the posterior sampling for the  shrinkage prior and the SSSL in case {\tt C1}. 
From Tables \ref{tb:sim1} and \ref{tb:sim2}, we can see that  the proposed shrinkage  and the SSSL estimators are better than the sample covariance in all cases, and that the proposed shrinkage estimator performs better or at least  comparable to the  SSSL estimator, while the  posterior sampling algorithm of the shrinkage prior is more efficient than that of the SSSL in terms of ESS (Figure \ref{fig:ess1}). Additionally, the posterior sampling of the shrinkage prior  takes about 171 seconds per 1,000 ESS, but that of the SSSL takes 789 seconds with iMac Pro with 3 GHz 10-Core Intel Xeon processor. Finally, Table \ref{tb:sim2} shows that the continuous shrinkage prior produces more accurate estimates when signals are small, while the spike and slab prior can capture large signals more efficiently.

\begin{table}[!ht]
	\centering
	\caption{{\tt rmse} and {\tt mnorm} under the covariance  structure {\tt C1}.}
	\begin{tabular}{l|rrr}
		\hline\hline
		& Proposed & SSSL & SampCov \\ \hline
		{\tt rmse} & {\bf 0.020 (0.003)} &{\bf 0.020 (0.003)} & 0.028 (0.004) \\
		{\tt mnorm} & {\bf 0.114 (0.051)} & 0.120 (0.050) & 0.120 (0.054) \\
		\hline\hline
	\end{tabular}
	\label{tb:sim1}
\end{table}
\begin{table}[!ht]
	\centering
	\caption{{\tt rmse} and {\tt mnorm} under the covariance  structure {\tt C2}.}
	{\small
		\begin{tabular}{c|| l| l| rrr}
			\hline\hline
			 &  & Measure  & Proposed & SSSL & SampCov \\ \hline
			$n = 100$ & $\mu = 0.02 $ &{\tt rmse} & {\bf 0.034 (0.008)}  & 0.038 (0.007) & 0.109 (0.007) \\
			$(p = 50)$ &            & {\tt mnorm} & {\bf 0.950 (0.407)} & 0.994 (0.405)  & 1.137 (0.409) \\ 
			\cline{2-6}
			& $\mu = 0.1$ &{\tt rmse} & {\bf 0.063 (0.004)}  & 0.065 (0.005)  & 0.127 (0.007) \\
			&            & {\tt mnorm} & {\bf 0.985 (0.409)} & 0.998 (0.418)  & 1.176 (0.414) \\ 
			\cline{2-6}
			& $\mu = 0.5$ &{\tt rmse} & 0.287 (0.002)  & {\bf 0.281 (0.004)}  & 0.271 (0.014) \\
			&            & {\tt mnorm} & {\bf 1.371(0.560)} & 1.470 (0.530)  & 1.609 (0.560) \\ 
			\cline{2-6}
			& $\mu = 1$ &{\tt rmse} & 0.561 (0.003)  & {\bf 0.544 (0.006)}  & 0.467 (0.023)\\
			&            & {\tt mnorm} & {\bf 1.789 (0.397)} & 1.800 (0.370) & 2.117 (0.403) \\ 
			\hline\hline
			$n = 50$ & $\mu = 0.02 $ &{\tt rmse} & {\bf 0.043 (0.009)}  & 0.049 (0.009)  & 0.153 (0.010) \\
			$(p = 50)$&            & {\tt mnorm} & {\bf 1.238 (0.460)}  & 1.265 (0.472) & 1.526 (0.474) \\ 
			\cline{2-6}
			& $\mu = 0.1$ &{\tt rmse} & {\bf 0.071 (0.006)}  & 0.075 (0.007)  & 0.179 (0.010) \\
			&            & {\tt mnorm} & {\bf 1.263 (0.487)} & 1.306 (0.490)  & 1.583 (0.509) \\ 
			\cline{2-6}
			& $\mu = 0.5$ &{\tt rmse} & 0.292 (0.003)  & {\bf 0.285 (0.003)}  & 0.382 (0.019) \\
			&            & {\tt mnorm} & {\bf 1.564 (0.397)} & 1.580 (0.361)  & 2.056 (0.461) \\ 
			\cline{2-6}
			& $\mu = 1$ &{\tt rmse} & 0.582 (0.003)  & {\bf 0.558 (0.006)}  & 0.663 (0.034)\\
			&            & {\tt mnorm} & 2.278 (0.519) & {\bf 2.203 (0.534)}  & 2.991 (0.530) \\ 
			\hline\hline
		\end{tabular}
	}
	\label{tb:sim2}
\end{table}
\begin{figure}[!ht]
	\centering
	\includegraphics[width=11cm,height=11cm]{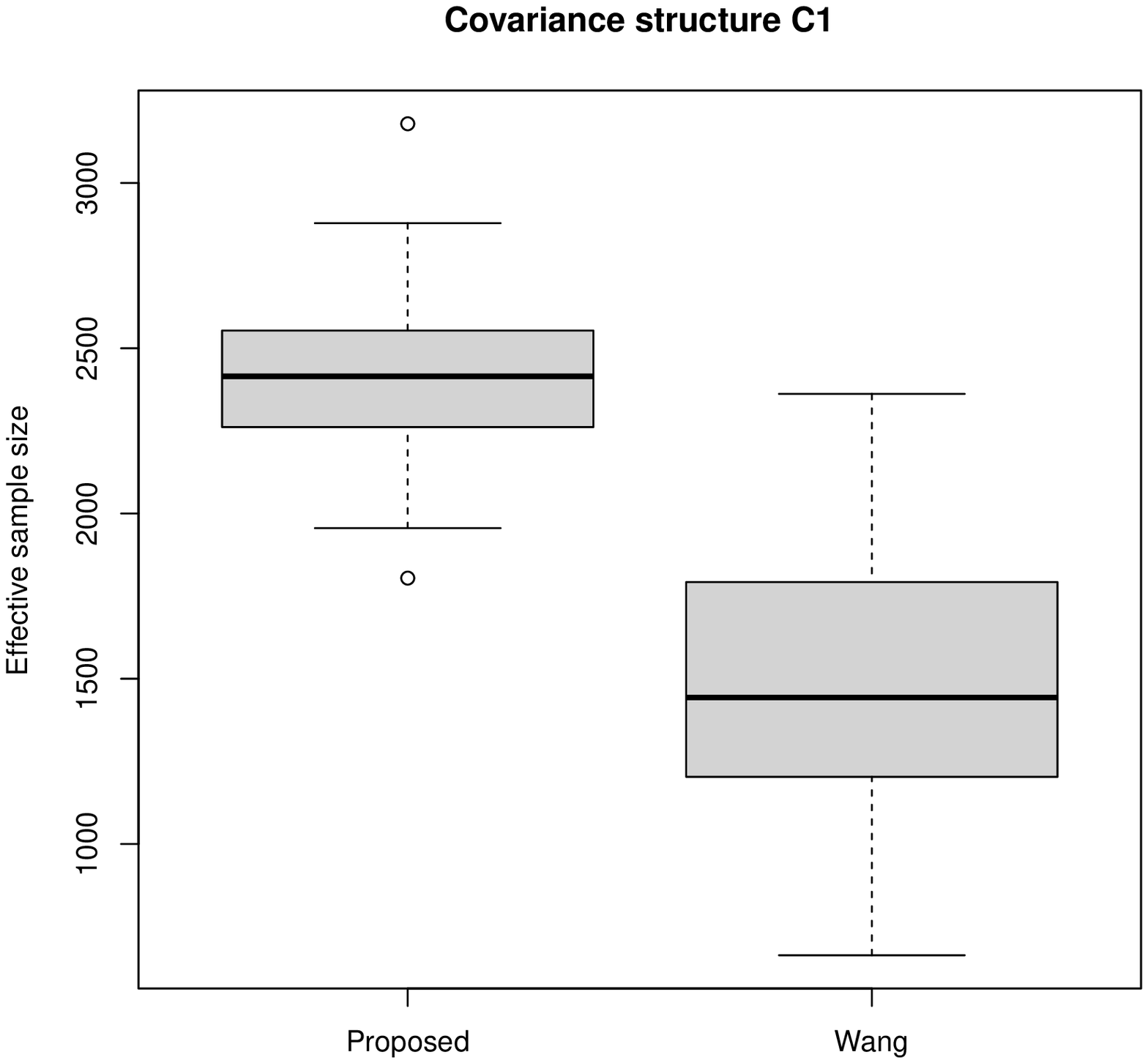}
	\caption{Effective sample size of the posterior samples of the shrinkage prior and the SSSL in case {\tt C1}.}
	\label{fig:ess1}
\end{figure}

\subsection{Real data application}
In this section, we consider two datasets to assess the performance of the shrinkage prior  for linear discriminant analysis (LDA) classification  \citep{anderson03}. The first is a colon cancer data, described in \cite{alon1999}, \cite{fisher11} and \cite{touloumis15}. The data set can be obtained from \url{http://genomics-pubs.princeton.edu/oncology/affydata}. 
It contains expression level measurements of  2000 genes on 40 normal and 22 colon tumor tissues. 

As the second example, we consider the leukemia data \citep{golub99,zhu04,guo07} which consists of 7128 gene expression measurements on 72 leukemia patients, 47 ``ALL" and 25 ``AML". This data set is available at \url{http://web.stanford.edu/~hastie}.

Following \cite{rothman08}, we select $p$ most significant genes with the two-sample $t$-statistic for $p = 50$ (colon data) and $p = 71$ (leukemia data) and then apply LDA to the datasets for classifying each observation in each data set into two groups. The LDA rule for an observation $X$ is given as
$$
\delta_j(X) = \argmax_j\left\{ X^\top \hat{\Sigma}^{-1}\hat{\boldsymbol{\mu}}_j - \frac{1}{2}\hat{\boldsymbol{\mu}}_j^{\top} \hat{\Sigma}^{-1}\hat{\boldsymbol{\mu}}_j + \log \hat{\omega}_j \right\}, ~~ j = 1, 2,
$$
where $\hat{\omega}_j$ is the proportion of class $j$ and $\hat{\boldsymbol{\mu}}_j$ is the sample mean for class $j$.

Table \ref{tb:realdata} shows leave-one-out cross validation (LOOCV) error rates for misclassified observations. From the result, we can see that our estimator outperforms two competitors, SampCov and SSSL for both data sets.
\begin{table}[!ht]
	\centering
	\caption{Classification error for colon and leukemia data.}
	{\small
		\begin{tabular}{l|| rrr}
			\hline\hline
			Dataset  & Proposed & SSSL &  SampCov \\ \hline
			Colon & 0.097  & 0.113 & 0.333 \\
			Leukemia & 0.014 & 0.028 & 0.486 \\ 
			\hline\hline
		\end{tabular}
	}
	\label{tb:realdata}
\end{table}

\section{Discussion}\label{sec:disc}
In this paper, we propose a theoretically supported shrinkage prior for sparse covariance matrices.
We prove that the proposed shrinkage prior achieves the minimax posterior convergence rate under the Frobenius norm in high-dimensional settings. The shrinkage prior performs better than or comparable to the SSSL method in the simulation studies, while is computationally more efficient than the SSSL. 
In our simulation study, the proposed shrinkage prior  is 4 times faster than the SSSL in terms of the computation time per ESS.
Two real data examples, colon and leukemia data, show the benefit of the LDA classification based on the proposed Bayesian method.

\section*{Acknowledgements}
This work was supported by the National Research Foundation of Korea (NRF) grant funded by the Korea government(MSIT) (No. 2020R1A4A1018207).
Seongil Jo was supported by Basic Science Research Program through the National Research Foundation of Korea (NRF) funded by the Ministry of Education (NRF-2017R1D1A3B03035235).

\newpage

\section*{Appendix A: Proof of Theorem \ref{thm:conv_rate}}
Let
\bea
K( f_{\sg_0}, f_\Sigma )  & := & \int f_{\sg_0} (x) \log \frac{ f_{\sg_0} (x) }{ f_{\Sigma} (x) } dx ,  \\
V(f_{\sg_0} , f_\Sigma )  & := & \int f_{\sg_0}  (x) \left( \log \frac{ f_{\sg_0} (x) }{ f_{\Sigma} (x) } \right)^2 dx ,
\eea 
where $f_\sg$ is the probability density function of $N_p(0, \sg)$ based on $n$ random samples $X_1,\ldots, X_n$. 
For a given $\epsilon > 0$, let 
\bea
B_\epsilon &:=& \Big\{ f_{\Sigma}: \,\, \Sigma \in \calC_p, \,\,  K( f_{\sg_0}, f_\Sigma )    < \epsilon^2, \,\,  V(f_{\sg_0} , f_\Sigma )  < \epsilon^2 \Big\}. 
\eea
To prove Theorem \ref{thm:conv_rate}, we apply Theorem 2.1 in \cite{ghosal2000convergence}.

\begin{lemma}[A version of Theorem 2.1 in \cite{ghosal2000convergence}]\label{lem:2000}
	Let $d$ be the Hellinger metric, and let $\calP = \{ f_\sg : \sg \in \calC_p \}$.
	Consider a sieve $\calP_n \subset \calP$ and  a sequence $\epsilon_n$ with $\epsilon_n\to 0$ and $n \epsilon_n^2\to\infty$.
	If, for some constants $C_1$ and $C_2>0$,
	\bean
	\log D(\epsilon_n, \calP_n , d ) &\le& C_1n \epsilon_n^2  \label{packing} \\
	\pi(\calP_n^c) &\le& \exp \big\{ - (C_2+4)n\epsilon_n^2  \big\}   , \label{prior}   \\
	\pi (  B_{\epsilon_n}  )  &\ge& \exp (- C_2 n\epsilon_n^2) ,    \label{KL}
	\eean
	then for sufficiently large $M>0$, we have
	\bea
	\pi \big(  d( f_{\sg_0}, f_\sg  ) > M \epsilon_n \mid \bfX_n \big) 
	&\lra& 0  
	\eea
	as $n\to\infty$ in $\bbP_{\sg_0}$-probability,  where $D(\epsilon, \calP_n , d )$ is the $\epsilon$-packing number of $\calP_n$ with respect to the distance $d$. 
\end{lemma}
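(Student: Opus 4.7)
The plan is to follow the classical posterior contraction argument of \cite{ghosal2000convergence}, since this is essentially their Theorem 2.1 restated with $d$ fixed as the Hellinger metric. Write the posterior probability as a ratio $N_n/D_n$, where
\bea
D_n &=& \int \prod_{i=1}^n \frac{f_\sg(X_i)}{f_{\sg_0}(X_i)} \, \pi(d\sg),
\eea
and $N_n$ is the same integral restricted to $A_n = \{\sg : d(f_{\sg_0}, f_\sg) > M\epsilon_n\}$. The three standard pieces are: (i) lower bound $D_n$ using \eqref{KL}; (ii) control $N_n$ restricted to the sieve by sieve-based exponential tests built from \eqref{packing}; and (iii) bound the remaining part of $N_n$ directly by $\pi(\calP_n^c)$ via \eqref{prior}.

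First, I would invoke the standard evidence lower bound: Lemma 8.1 of \cite{ghosal2000convergence} shows that on an event $E_n$ with $\bbP_{\sg_0}(E_n^c) \to 0$, one has $D_n \ge \pi(B_{\epsilon_n}) \, e^{-2n\epsilon_n^2}$, which by \eqref{KL} is at least $e^{-(C_2+2)n\epsilon_n^2}$. This is a standard concentration argument using Chebyshev on the log-likelihood ratio and the KL/$V$ control defining $B_{\epsilon_n}$.

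Second, I would construct tests $\Phi_n$ with $\bbE_{\sg_0}\Phi_n \to 0$ and
\bea
\sup_{\sg \in A_n \cap \calP_n} \bbE_\sg(1 - \Phi_n) &\le& e^{-K M^2 n \epsilon_n^2}
\eea
for some universal constant $K>0$. The basic building block is the Birg\'e--Le Cam lemma, which, for any fixed $f_1$ at Hellinger distance at least $\epsilon$ from $f_{\sg_0}$, supplies a test with both error probabilities bounded by $e^{-Kn\epsilon^2}$ uniformly over the Hellinger ball of radius $\epsilon/2$ around $f_1$. I would partition $A_n \cap \calP_n$ into Hellinger shells $A_{n,j} = \{\sg \in \calP_n : j\epsilon_n < d(f_{\sg_0}, f_\sg) \le (j+1)\epsilon_n\}$ for $j \ge M$, cover each shell by at most $D(\epsilon_n, \calP_n, d) \le e^{C_1 n \epsilon_n^2}$ Hellinger balls of radius $j\epsilon_n/2$ (here the monotonicity of packing numbers, together with $j\epsilon_n/2 \ge \epsilon_n$ for $j \ge M \ge 2$, is what makes \eqref{packing} usable), apply the Birg\'e--Le Cam test at each center, and take the maximum. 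A union bound then yields $\bbE_{\sg_0}\Phi_n \le \sum_{j \ge M} e^{(C_1 - K j^2) n \epsilon_n^2}$, which tends to zero as soon as $K M^2 > C_1$.

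Third, I would assemble the pieces. By Fubini's theorem,
\bea
\bbE_{\sg_0}\Big[(1 - \Phi_n)\!\int_{A_n \cap \calP_n}\! \prod_{i=1}^n \frac{f_\sg}{f_{\sg_0}}(X_i)\, \pi(d\sg)\Big] &\le& \int_{A_n \cap \calP_n} \bbE_\sg(1 - \Phi_n)\,\pi(d\sg) \,\,\le\,\, e^{-K M^2 n \epsilon_n^2},
\eea
while the contribution outside the sieve is at most $\pi(\calP_n^c) \le e^{-(C_2+4) n \epsilon_n^2}$ by \eqref{prior}. Dividing these by the lower bound on $D_n$ available on $E_n$ and choosing $M$ so large that $K M^2 > C_2 + 2$, both ratios are bounded by $e^{-2 n \epsilon_n^2} \to 0$; the remaining $\Phi_n$ term is absorbed into $\bbE_{\sg_0}\Phi_n \to 0$, which controls convergence in $\bbP_{\sg_0}$-probability.

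The main obstacle is the test-construction step: the Birg\'e--Le Cam exponential test for Hellinger-separated densities is a classical but nontrivial ingredient, and one must chain it across infinitely many shells while keeping the geometric series $\sum_{j \ge M} e^{(C_1 - K j^2) n \epsilon_n^2}$ summable and small. Everything else is exponential-rate accounting. Because the argument is a verbatim relabeling of the proof of Theorem 2.1 in \cite{ghosal2000convergence} specialized to Hellinger $d$ and the parametric family $\calP$, in practice one verifies the three numbered hypotheses and invokes their theorem directly.
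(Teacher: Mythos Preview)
Your proposal is correct and faithfully sketches the standard Ghosal--Ghosh--van der Vaart argument (evidence lower bound from \eqref{KL}, sieve tests from \eqref{packing} via Birg\'e--Le Cam, and the remainder controlled by \eqref{prior}). The paper itself gives no independent proof of this lemma: it is stated as a direct restatement of Theorem~2.1 in \cite{ghosal2000convergence} and is invoked by citation, so your approach and the paper's coincide.
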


Based on the above lemma, it suffices to show that conditions \eqref{packing}-\eqref{KL} hold under the assumptions in Theorem \ref{thm:conv_rate}, using 
\bea 
\epsilon_n &:=& \Big( \frac{(p + s_0)\log p}{n} \Big)^{1/2} .
\eea
For the rest, $\pi$ denotes the proposed shrinkage prior \eqref{sprior}.
With a slightly abuse of notation, $\pi$ also denotes the prior for $f_{\Sigma}$ induced by the shrinkage prior \eqref{sprior}.

\subsection*{A1. The upper bound of the packing number}

We define 
\bea
\calP_n  &=& \Big\{   f_\sg: |s(\sg, \delta_n)| \le s_n, \,\,  \tau^{-1} \le \lambda_{\min}(\sg)\le \lambda_{\max}(\sg)\le \tau ,   \,\,  \|\sg\|_{\max} \le L_n    \Big\}, \\
\mathcal{U}(\delta_n, s_n, L_n,\tau) &=& \Big\{  \sg\in \calC_p: |s(\sg, \delta_n)| \le s_n, \,\,  \tau^{-1} \le \lambda_{\min}(\sg)\le \lambda_{\max}(\sg)\le \tau ,   \,\,  \|\sg\|_{\max} \le L_n   \Big\}
\eea
for some positive constants $\delta_n, s_n,L_n$ and $\tau$,  where $\|\Sigma \|_{\max} = \max_{ij} |\sigma_{ij}|$ for $\Sigma = (\sigma_{ij})$.  
Here $\tau$ is a fixed large constant such that $\tau_0 < \tau$.
$\delta_n, s_n$ and $L_n$ are specified in the following theorem.
It gives the upper bound of the packing number in \eqref{packing}.

\begin{theorem}[The upper bound of the packing number]\label{thm:packing}
	If $\tau^4 \le p$, $p \asymp n^{\beta}$ for some $0<\beta<1$,
	$s_n = c_1 n\epsilon_n^2 / \log p$, $L_n = c_2 n\epsilon_n^2$ and $\delta_n = \epsilon_n/\tau^3$  for some constants $c_1>1$ and $c_2>0$,
	we have 
	\bea
	\log D(\epsilon_n, \calP_n , d ) &\le& (12 + 1/\beta )   c_1  n\epsilon_n^2  .
	\eea
\end{theorem}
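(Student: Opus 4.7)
The plan is to carry out a standard two-stage metric entropy bound: first translate the Hellinger packing into a Frobenius packing of the covariance parameter set, then cover the latter by enumerating sparsity patterns and discretizing the values at the significant entries. The bounded-eigenvalue condition on $\calP_n$ implies, for some constants depending only on $\tau$, that the Hellinger distance between $N_p(0, \sg)$ and $N_p(0, \sg_0)$ is comparable to $\|\sg - \sg_0\|_F$; this reduces the problem to bounding a Frobenius packing number of $\mathcal{U}(\delta_n, s_n, L_n, \tau)$ at scale proportional to $\epsilon_n$.

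Given $\sg$ in the sieve, the condition $|s(\sg, \delta_n)| \le s_n$ lets me approximate $\sg$ by a matrix $\bar\sg$ that is $s_n$-sparse off the diagonal by thresholding the insignificant entries to zero; under $\delta_n = \epsilon_n/\tau^3$ and using $\tau^4 \le p$ to control the Hellinger-Frobenius conversion constants, $\|\sg - \bar\sg\|_F$ is of order $\epsilon_n$. By the triangle inequality, it then suffices to $\epsilon_n'$-cover the set of such $\bar\sg$ for some $\epsilon_n' \asymp \epsilon_n$. The number of sparsity patterns is at most $\binom{p(p-1)/2}{s_n} \le (ep^2/(2 s_n))^{s_n}$; plugging in $s_n = c_1 n\epsilon_n^2 /\log p = c_1(p + s_0) \ge c_1 p$ yields $p^2/s_n \le p/c_1$, so the log count of patterns is bounded by $s_n \log p + O(s_n) \le (1 + o(1))\, c_1 n\epsilon_n^2$.

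For each fixed sparsity pattern, I cover the $s_n + p$ free entries—up to $s_n$ significant off-diagonals each in $[-L_n, L_n]$, plus $p$ diagonals each in $[\tau^{-1}, \tau]$—by a tensor grid of spacing $\eta \asymp \epsilon_n/\sqrt{s_n + p}$, so that the Frobenius entrywise error is within the remaining budget. The grid has at most $(2L_n/\eta)^{s_n + p}$ points. Since $L_n = c_2 n\epsilon_n^2$, $\sqrt{s_n + p} \le \sqrt{2 s_n}$ and $s_n = O(n\epsilon_n^2)$, the logarithm of $L_n\sqrt{s_n + p}/\epsilon_n$ is $O(\log n) = O(\beta^{-1}\log p)$ via $p \asymp n^\beta$. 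Multiplying by $s_n + p \le 2 s_n$ and using $s_n \log p = c_1 n\epsilon_n^2$, the grid log count is at most $O(c_1 \beta^{-1} n\epsilon_n^2)$. Adding the two contributions gives $\log D(\epsilon_n, \calP_n, d) \le (12 + \beta^{-1})\, c_1 n\epsilon_n^2$, as stated.

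The main obstacle is the careful bookkeeping: the Frobenius budget of order $\epsilon_n$ must be split between the $\delta_n$-thresholding error (controlling the insignificant entries) and the grid error at significant entries; the $\tau$-dependent constants from the Hellinger–Frobenius equivalence must be absorbed (which is where $\tau^4 \le p$ enters); and the factor $1/\beta$ is produced only by trading $\log n$ for $\log p$ under $p \asymp n^\beta$. The sparsity-pattern enumeration delivers the dominant $c_1 n\epsilon_n^2$, while the grid adds only logarithmic-in-$n$ factors—it is essential that the latter are logarithmic and not polynomial, since otherwise the final bound would exceed $n\epsilon_n^2$.
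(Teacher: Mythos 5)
Your strategy---pass from the Hellinger packing to a Frobenius packing of $\mathcal{U}(\delta_n, s_n, L_n, \tau)$ via the bounded-eigenvalue conversion, enumerate the $s_n$-sparse support patterns, discretize the diagonals and the significant off-diagonals on a grid, and trade $\log L_n\asymp\log n$ for $\beta^{-1}\log p$ using $p\asymp n^\beta$---is the same as the paper's. The overall structure and almost all of the bookkeeping coincide with the paper's proof.

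The one step you should not wave through is the thresholding. You assert that zeroing the entries with $|\sigma_{jk}|\le\delta_n$ yields a thresholded matrix $\bar\sg$ with $\|\sg-\bar\sg\|_F=O(\epsilon_n)$, and you cite $\tau^4\le p$ as the reason. As written this is not correct: a matrix in the sieve can have up to $\binom{p}{2}$ sub-threshold off-diagonal entries, so the worst-case Frobenius residual is of order $p\,\delta_n = p\,\epsilon_n/\tau^3$, and under $\tau^4\le p$ (hence $\tau^3\le p^{3/4}$) this is at least $p^{1/4}\epsilon_n\gg\epsilon_n$, not $O(\epsilon_n)$. In the paper, $\tau^4\le p$ serves only to absorb $\log\tau^3\le\tfrac{3}{4}\log p$ into the entropy count; it does not control a truncation residual and actually makes your claimed bound worse, not better. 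To be fair, the paper's displayed inequality bounding $D\big(\epsilon_n/(C\tau^3),\mathcal{U}(\delta_n,s_n,L_n,\tau),\|\cdot\|_F\big)$ by a sum over patterns and grids silently makes the same simplification---it never accounts for the contribution of the many sub-threshold entries---so you are reproducing a tacit step rather than inventing a new one. But your explicit sentence claiming $\|\sg-\bar\sg\|_F=O(\epsilon_n)$ is unsupported and should either be removed (as in the paper) or replaced by an argument that explicitly covers the sub-threshold block of $\mathcal{U}(\delta_n, s_n, L_n, \tau)$ and checks that its metric entropy is affordable.
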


\subsection*{A2. The upper bound of the prior mass on $\calP_n^c$}


\begin{lemma}\label{lower.bound}  
	If $a=b=1/2$, $\tau_1^2 \asymp 1/( n p^4 \tau^2 )$ and $\tau>3$, we have 
	\bea
	\pi^u( \Sigma \in \mathcal{U}(\tau) ) >  \Big\{   \frac{\lambda \tau}{8} \exp \big( - \frac{\lambda \tau}{4} - \frac{C}{\sqrt{n} } \big)  \Big\}^p  
	\eea
	for some constant $C>0$.
\end{lemma}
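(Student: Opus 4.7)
\textbf{Proof proposal for Lemma \ref{lower.bound}.}
The plan is to lower bound the unconstrained prior mass on $\mathcal{U}(\tau)$ by the mass on a smaller, explicit ``almost-diagonal'' event $G$ on which positive definiteness and the eigenvalue bounds can be verified directly via Gershgorin's circle theorem. Concretely, set $\delta_\tau := \tau/4 - 1/\tau$ (positive since $\tau > 3$) and define
\[
 G \;:=\; \Big\{ \sigma_{jj}\in [\tau/4,\tau/2] \text{ for all } j,\;\; |\sigma_{jk}|\le \delta_\tau/p \text{ for all } j<k \Big\}.
\]
On $G$ the Gershgorin radii satisfy $R_j = \sum_{k\neq j}|\sigma_{jk}|\le \delta_\tau$, so every eigenvalue of $\Sigma$ lies in $[\tau/4-\delta_\tau,\tau/2+\delta_\tau]=[1/\tau,\tau/2+\tau/4-1/\tau]\subset[\tau^{-1},\tau]$. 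Hence $G\subset\{\Sigma\in\mathcal{U}(\tau)\}$, and by independence of the coordinates under $\pi^u$,
\[
 \pi^u(\Sigma\in\mathcal{U}(\tau)) \;\ge\; \pi^u(G) \;=\; \prod_{j=1}^{p} \pi^u\!\big(\sigma_{jj}\in[\tau/4,\tau/2]\big)\,\prod_{1\le j<k\le p} \pi^u\!\big(|\sigma_{jk}|\le \delta_\tau/p\big).
\]

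Next I would estimate each factor. For the diagonal, $\sigma_{jj}\sim \mathrm{Exp}(\lambda/2)$ has density $(\lambda/2)e^{-\lambda x/2}$, which is decreasing, so
\[
 \pi^u(\sigma_{jj}\in[\tau/4,\tau/2]) \;=\; e^{-\lambda\tau/8}-e^{-\lambda\tau/4} \;\ge\; (\tau/2-\tau/4)\cdot \tfrac{\lambda}{2} e^{-\lambda\tau/4} \;=\; \frac{\lambda\tau}{8}\,e^{-\lambda\tau/4}.
\]
For the off-diagonals, I would exploit the standard scale-mixture identity: with $a=b=1/2$, marginalizing $\rho_{jk}$ in \eqref{sig_jk}--\eqref{rho_jk} yields $\sigma_{jk}\sim \mathrm{Cauchy}(0,\tau_1)$, because $\sqrt{\rho_{jk}/(1-\rho_{jk})}$ is standard half-Cauchy when $\rho_{jk}\sim \mathrm{Beta}(1/2,1/2)$. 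Hence
\[
 \pi^u(|\sigma_{jk}|\le \delta_\tau/p) \;=\; \frac{2}{\pi}\arctan\!\Big(\frac{\delta_\tau/p}{\tau_1}\Big) \;=\; 1 - \frac{2}{\pi}\arctan\!\Big(\frac{p\,\tau_1}{\delta_\tau}\Big) \;\ge\; 1 - \frac{2\,p\,\tau_1}{\pi\,\delta_\tau},
\]
using $\arctan(x)\le x$. Because $\tau_1^2\asymp 1/(np^4\tau^2)$, the ratio $p\tau_1/\delta_\tau$ is of order $1/(\sqrt{n}\,p\,\tau\,\delta_\tau)\lesssim 1/(\sqrt{n}\,p)$ since $\tau$ and $\delta_\tau$ are bounded away from $0$.

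Finally I would take the product and use $\log(1-x)\ge -2x$ for $x$ small. Since there are $p(p-1)/2$ off-diagonals each losing at most $C'/(\sqrt{n}\,p)$ in $1-(\cdot)$,
\[
 \sum_{j<k}\log \pi^u(|\sigma_{jk}|\le\delta_\tau/p) \;\ge\; -\,\frac{p(p-1)}{2}\cdot\frac{2C'}{\sqrt{n}\,p} \;\ge\; -\frac{C\,p}{\sqrt{n}}
\]
for some constant $C>0$. Combining with the diagonal factors gives
\[
 \pi^u(\Sigma\in\mathcal{U}(\tau)) \;\ge\; \Big(\frac{\lambda\tau}{8}e^{-\lambda\tau/4}\Big)^{\!p} \exp\!\Big(-\frac{Cp}{\sqrt n}\Big) \;=\; \Big\{\frac{\lambda\tau}{8}\exp\!\Big(-\frac{\lambda\tau}{4}-\frac{C}{\sqrt n}\Big)\Big\}^{\!p},
\]
which is the claim. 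The main technical point to handle carefully is the choice of the truncation level $\delta_\tau/p$: it must be small enough that Gershgorin forces the eigenvalues into $[\tau^{-1},\tau]$ uniformly, yet large enough (relative to the Cauchy scale $\tau_1\asymp 1/(\sqrt n\,p^2\tau)$) that the $p(p-1)/2$ off-diagonal probabilities lose only $O(p/\sqrt n)$ in the log; the scaling $\tau_1^2 \asymp 1/(np^4\tau^2)$ is exactly what makes both requirements simultaneously feasible.
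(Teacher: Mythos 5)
Your overall structure mirrors the paper's proof: both truncate to an explicit ``almost diagonal'' event, apply Gershgorin's circle theorem to show the eigenvalue constraints hold on that event, factor by independence, and estimate the diagonal and off-diagonal factors separately. Your Gershgorin bookkeeping (eigenvalues in $[\tau/4-\delta_\tau,\tau/2+\delta_\tau]=[\tau^{-1}, 3\tau/4-1/\tau]\subset[\tau^{-1},\tau]$ when $\delta_\tau=\tau/4-1/\tau$) and your diagonal bound $\pi^u(\sigma_{jj}\in[\tau/4,\tau/2])\ge\frac{\lambda\tau}{8}e^{-\lambda\tau/4}$ are correct and essentially match the paper, which uses the interval $[2/\tau,\tau/2]$ and a slightly weaker $\lambda_{\max}(\Sigma)\le 2\max_j\sigma_{jj}$ bound through a conditional decomposition.

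There is, however, a concrete error in your off-diagonal step. With $a=b=1/2$, the local scale $\sqrt{\rho_{jk}/(1-\rho_{jk})}$ is indeed standard half-Cauchy, but the resulting normal scale mixture is the \emph{horseshoe} density, not a Cauchy. A Cauchy marginal arises from an inverse-gamma scale mixture, not a half-Cauchy one; in fact the horseshoe marginal density diverges at the origin (it behaves like $-\log|x|$), so it cannot be a Cauchy, and your identity $\pi^u(|\sigma_{jk}|\le\delta)=\tfrac{2}{\pi}\arctan(\delta/\tau_1)$ is false. Fortunately your argument uses only a tail bound of the form $\pi^u(|\sigma_{jk}|>\delta)\lesssim\tau_1/\delta$, which the horseshoe density \emph{does} satisfy: the paper obtains it from Theorem~1 of \cite{carvalho2010horseshoe}, which gives $\pi^u(\sigma_{kj})\le\tau_1^{-1}(2\pi^3)^{-1/2}\log(1+2\tau_1^2/\sigma_{kj}^2)$, and then integrates $\log(1+2\tau_1^2/x^2)\le 2\tau_1^2/x^2$. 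Replacing your (incorrect) Cauchy CDF computation with this horseshoe tail bound leaves the rest of your argument — the factor of $\exp(-Cp/\sqrt n)$ from the $p(p-1)/2$ off-diagonal terms — intact, with only a different absolute constant $C$.
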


The following theorem gives the upper bound of the prior mass on $\calP_n^c$ in \eqref{prior}.

\begin{theorem}[The upper bound of the prior mass]\label{thm:sieve}
	If $\delta_n =  \epsilon_n/\tau^3$,
	$3< \tau \le  (\log p) /  \lambda $, 
	$a=b=1/2$, $\tau_1^2 \asymp 1/( n p^4 \tau^2 )$,
	$\tau^4  \ll (p+s_0)^2 \log p$, 
	we have
	\bea
	\pi(\calP_n^c) &\le& \exp   \big\{  - ( c_1- 1) n \epsilon_n^2/3   \big\}   .
	\eea
\end{theorem}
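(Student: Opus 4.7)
The strategy is to reduce bounds under $\pi$ to bounds under the unconstrained prior $\pi^u$ via $\pi(A) = \pi^u(A \cap \calU(\tau))/\pi^u(\calU(\tau))$, and then apply Lemma \ref{lower.bound} to the denominator. A useful preliminary observation is that the max-norm and eigenvalue constraints appearing in the sieve $\calP_n$ are automatic under $\pi$: since $\pi$ is supported on $\calU(\tau)$ and any $\sg \in \calU(\tau)$ is positive definite, the elementary bound $|\sigma_{ij}|^2 \le \sigma_{ii}\sigma_{jj}$ gives $\|\sg\|_{\max} \le \lambda_{\max}(\sg) \le \tau$, and under $\tau \le (\log p)/\lambda$ with $L_n = c_2(p+s_0)\log p$ one has $\tau \le L_n$ for all large $p$. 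Thus
\[
\pi(\calP_n^c) \;=\; \pi\bigl(|s(\sg,\delta_n)| > s_n\bigr) \;\le\; \frac{\pi^u\bigl(|s(\sg,\delta_n)| > s_n\bigr)}{\pi^u(\calU(\tau))},
\]
and only the sparsity tail under $\pi^u$ remains to be controlled.

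For the numerator, with $a = b = 1/2$ the off-diagonal marginal of $\sigma_{jk}$ under $\pi^u$ is a horseshoe/Cauchy-type shrinkage density with global scale $\tau_1$, satisfying the polynomial tail bound $\pi^u(|\sigma_{jk}| > t) \lesssim \tau_1/t$ for $t \gg \tau_1$. Substituting $\tau_1^2 \asymp 1/(np^4\tau^2)$ and $\delta_n = \epsilon_n/\tau^3$ yields a per-entry exceedance probability
\[
q \;:=\; \pi^u(|\sigma_{jk}| > \delta_n) \;\lesssim\; \tau_1 \tau^3/\epsilon_n \;\asymp\; \tau^2\bigl/\bigl(p^2\sqrt{n}\,\epsilon_n\bigr),
\]
so that $|s(\sg,\delta_n)|$ is Binomial$(N,q)$ with $N = \binom{p}{2}$ and $Nq \lesssim \tau^2/\sqrt{(p+s_0)\log p}$. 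The hypothesis $\tau^4 \ll (p+s_0)^2\log p$ then forces $Nq = o(\sqrt{p+s_0})$, which is vastly smaller than $s_n = c_1(p+s_0)$, so a Chernoff bound $\pi^u(|s(\sg,\delta_n)| > s_n) \le (eNq/s_n)^{s_n}$, after taking logs and using $\log(p+s_0) \ge \log p$, yields
\[
\pi^u\bigl(|s(\sg,\delta_n)| > s_n\bigr) \;\le\; \exp\bigl\{-(3/2 - o(1))\,c_1\,n\epsilon_n^2\bigr\}.
\]

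For the denominator, Lemma \ref{lower.bound} gives $-\log \pi^u(\calU(\tau)) \le p\bigl[\log(8/(\lambda\tau)) + \lambda\tau/4 + C/\sqrt{n}\bigr]$. Under $\tau \le (\log p)/\lambda$ the dominant piece is $p\lambda\tau/4 \le p\log p/4$, and $p\log p \le (p+s_0)\log p = n\epsilon_n^2$ implies $-\log \pi^u(\calU(\tau)) \le (1/4 + o(1))\,n\epsilon_n^2$. Combining,
\[
\log \pi(\calP_n^c) \;\le\; -(3/2 - o(1))\,c_1\,n\epsilon_n^2 + (1/4 + o(1))\,n\epsilon_n^2 \;\le\; -(c_1-1)\,n\epsilon_n^2/3
\]
for all sufficiently large $n$, since a direct calculation gives $(3/2)c_1 - 1/4 - (c_1-1)/3 = (7c_1+2)/6 - 1/4 > 0$ for every $c_1 > 1$.

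The main bookkeeping challenge is executing the Chernoff step sharply enough to beat simultaneously the logarithmic loss from Lemma \ref{lower.bound} and the target exponent $(c_1-1)/3$; once the horseshoe tail estimate $\pi^u(|\sigma_{jk}|>t) \lesssim \tau_1/t$ is in place and the sieve-max-norm condition is recognized as automatic, the remainder is arithmetic with the stated hyperparameter scalings and the condition $\tau^4 \ll (p+s_0)^2\log p$.
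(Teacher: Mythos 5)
Your proposal follows essentially the same route as the paper: reduce to the unconstrained prior $\pi^u$, bound the exceedance probability $\nu_n = \pi^u(|\sigma_{jk}|>\delta_n) \lesssim \tau_1/\delta_n$ via the horseshoe tail, control the binomial tail for $|s(\sg,\delta_n)|$, and divide out $\pi^u(\calU(\tau))$ using Lemma \ref{lower.bound}; the paper also disposes of the $\|\sg\|_{\max}$ constraint exactly as you do, noting $\lambda_{\max}(\sg)\le\tau\le L_n$ eventually. One quantitative point: the intermediate exponent you claim, $\exp\{-(3/2-o(1))c_1 n\epsilon_n^2\}$, is not what your own chain of estimates produces under the stated hypothesis $\tau^4\ll(p+s_0)^2\log p$ — from $Nq=o(\sqrt{p+s_0})$ you only get $\log(s_n/Nq)\ge(1/2)\log(p+s_0)+\omega(1)$, hence $(eNq/s_n)^{s_n}\le\exp\{-(1/2-o(1))c_1 n\epsilon_n^2\}$, which is what the paper obtains (via Lemma A.3 of Song and Liang rather than the elementary $\binom{N}{s}q^s$ Chernoff bound you use, but the two are interchangeable here); the $3/2$ would need the extra assumption $\tau=O(1)$. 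Since $(1/2)c_1 - 1/4 > (c_1-1)/3$ for all $c_1>1$, the final bound still follows, so the proof goes through despite the misstated intermediate constant.
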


\subsection*{A3. The lower bound for $\pi (  B_{\epsilon_n}  )$}

\begin{lemma}\label{KV.ineq} 
	If $\sg_0 \in \calU(s_0, \tau_0)$ and $\sg \in \calU(\tau)$, we have
	\begin{enumerate}
		\item[(i)] $K(f_{\sg_0}, f_{\Sigma} ) \leq \tau^4 \tau_0^2 \|\Sigma - \sg_0 \|_F^2$;
		\item[(ii)] $V(f_{\sg_0}, f_{\Sigma} ) \leq \frac{3}{2} \tau^4 \tau_0^2  \|\Sigma - \sg_0 \|_F^2$. 
	\end{enumerate}
\end{lemma}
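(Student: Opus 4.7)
The plan is to unroll the two quantities into closed-form Gaussian expressions and then bound every factor using the operator-norm bounds supplied by the constraints $\Sigma \in \mathcal{U}(\tau)$ and $\Sigma_0 \in \mathcal{U}(s_0,\tau_0)$, i.e.\ $\|\Sigma^{-1}\|_{\mathrm{op}} \le \tau$, $\|\Sigma_0\|_{\mathrm{op}} \le \tau_0$, and $\|\Sigma_0^{-1}\|_{\mathrm{op}} \le \tau_0$. Since both densities are centered multivariate normals, the KL divergence reduces to a spectral sum and the $V$-functional reduces to the variance of a Gaussian quadratic form, which makes everything ultimately a matter of scalar inequalities plus one application of sub-multiplicativity of the Frobenius/operator norm. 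The only genuinely non-trivial ingredient is a scalar inequality for $x - 1 - \log x$ valid on the entire compact interval $[(\tau\tau_0)^{-1}, \tau\tau_0]$, which is not a neighborhood of $1$.

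For (i), I would begin from the standard identity
\[
2K(f_{\Sigma_0},f_\Sigma) = \operatorname{tr}(\Sigma^{-1}\Sigma_0) - p - \log\det(\Sigma^{-1}\Sigma_0) = \sum_{i=1}^p (\mu_i - 1 - \log \mu_i),
\]
where $\mu_1,\dots,\mu_p$ are the eigenvalues of the symmetric matrix $\Sigma^{-1/2}\Sigma_0 \Sigma^{-1/2}$. The eigenvalue constraints give $\mu_i \in [(\tau\tau_0)^{-1}, \tau\tau_0]$. The key scalar step is to show that, on this interval, $x - 1 - \log x \le \tfrac{1}{2}(\tau\tau_0)^2(x-1)^2$. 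I would split into $x \ge 1$, where the classical bound $x - 1 - \log x \le \tfrac{1}{2}(x-1)^2$ applies, and $x \le 1$, where I would verify $x - 1 - \log x \le (x-1)^2/(2x^2)$ by showing that its gap is monotone (a direct differentiation gives derivative $-(x-1)^2(1+x)/x^3 \le 0$, with value $0$ at $x=1$). Summing and using $\sum_i(\mu_i-1)^2 = \|\Sigma^{-1/2}(\Sigma_0-\Sigma)\Sigma^{-1/2}\|_F^2 \le \tau^2\|\Sigma-\Sigma_0\|_F^2$, I obtain $K \le \tfrac{1}{4}\tau^4\tau_0^2\|\Sigma-\Sigma_0\|_F^2$, which is stronger than the stated bound.

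For (ii), the log-likelihood ratio is $\log (f_{\Sigma_0}/f_\Sigma)(x) = \tfrac{1}{2}\log\det(\Sigma\Sigma_0^{-1}) - \tfrac{1}{2}x^T(\Sigma_0^{-1}-\Sigma^{-1})x$, so
\[
V(f_{\Sigma_0},f_\Sigma) = \tfrac{1}{4}\operatorname{Var}_0\!\left(X^T(\Sigma_0^{-1}-\Sigma^{-1})X\right) = \tfrac{1}{2}\bigl\|\Sigma_0^{1/2}(\Sigma_0^{-1}-\Sigma^{-1})\Sigma_0^{1/2}\bigr\|_F^2,
\]
by the Gaussian quadratic-form variance formula $\operatorname{Var}(X^TAX) = 2\operatorname{tr}((A\Sigma_0)^2)$ applied to the symmetric matrix $A = \Sigma_0^{-1}-\Sigma^{-1}$. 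The resolvent identity $\Sigma_0^{-1}-\Sigma^{-1} = \Sigma^{-1}(\Sigma-\Sigma_0)\Sigma_0^{-1}$ then yields $V = \tfrac{1}{2}\|\Sigma_0^{1/2}\Sigma^{-1}(\Sigma-\Sigma_0)\Sigma_0^{-1/2}\|_F^2$, and three applications of $\|AB\|_F \le \|A\|_{\mathrm{op}}\|B\|_F$ together with the eigenvalue bounds $\|\Sigma_0^{1/2}\|_{\mathrm{op}}^2 \le \tau_0$, $\|\Sigma^{-1}\|_{\mathrm{op}}^2 \le \tau^2$, $\|\Sigma_0^{-1/2}\|_{\mathrm{op}}^2 \le \tau_0$ give $V \le \tfrac{1}{2}\tau^2\tau_0^2\|\Sigma-\Sigma_0\|_F^2$, which is easily absorbed into $\tfrac{3}{2}\tau^4\tau_0^2\|\Sigma-\Sigma_0\|_F^2$ since $\tau \ge 1$.

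The only real obstacle is the scalar inequality in part (i): because the admissible range of $\mu_i$ is fixed but not a small neighborhood of $1$, the naive quadratic Taylor expansion fails, and one must handle $x \le 1$ and $x \ge 1$ separately, paying the factor $(\tau\tau_0)^2$ coming from $1/\mu_i^2$ on the left branch. Once that inequality is secured, the rest is routine algebra and sub-multiplicativity of norms; part (ii) has no such obstruction.
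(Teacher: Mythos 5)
Your part (i) is correct, and it actually takes a cleaner route than the paper. The paper diagonalizes $\Sigma_0^{1/2}\Sigma^{-1}\Sigma_0^{1/2}$, invokes Lemma A.1 of \cite{banerjee2015bayesian} for the bound $\sum_i (1-d_i)^2 \lesssim \|\Sigma_0^{-1}-\Sigma^{-1}\|_F^2$, and then converts $\|\Sigma_0^{-1}-\Sigma^{-1}\|_F$ to $\|\Sigma-\Sigma_0\|_F$, incurring a further factor $\tau^2\tau_0^2$. You instead bound the scalar function $x-1-\log x$ on $[(\tau\tau_0)^{-1},\tau\tau_0]$ directly by splitting at $x=1$, and relate $\sum_i(\mu_i-1)^2$ to $\|\Sigma-\Sigma_0\|_F^2$ in one step via $\|\Sigma^{-1/2}(\Sigma_0-\Sigma)\Sigma^{-1/2}\|_F^2 \le \tau^2\|\Sigma-\Sigma_0\|_F^2$. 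Your scalar inequality is stated and verified correctly (the derivative $-(x-1)^2(1+x)/x^3$ of $(x-1)^2/(2x^2)-(x-1-\log x)$ is indeed nonpositive and the gap vanishes at $x=1$), and this yields the sharper constant $\tfrac14\tau^4\tau_0^2$ with no external citation needed. That is a genuine improvement in transparency over the paper's argument.

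Part (ii) has a genuine gap. The paper defines $V(f_{\Sigma_0},f_\Sigma)=\int f_{\Sigma_0}\bigl(\log (f_{\Sigma_0}/f_\Sigma)\bigr)^2\,dx$, the \emph{uncentered} second moment of the log-likelihood ratio, not its variance. Since the mean of the log-ratio under $f_{\Sigma_0}$ is exactly $K(f_{\Sigma_0},f_\Sigma)$, the correct decomposition is
\[
V(f_{\Sigma_0},f_\Sigma) \;=\; \tfrac14\,\operatorname{Var}_0\!\bigl(X^\top(\Sigma_0^{-1}-\Sigma^{-1})X\bigr) \;+\; K(f_{\Sigma_0},f_\Sigma)^2,
\]
and your displayed identity silently drops the $K^2$ term. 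The paper's own proof keeps this term, writing $V=\tfrac12\sum_i(1-d_i)^2+K^2$, and then must argue that $K^2\le \sum_i(1-d_i)^2$. Your bound $\tfrac14\operatorname{Var}_0(\cdot)\le \tfrac12\tau^2\tau_0^2\|\Sigma-\Sigma_0\|_F^2$ is correct as a bound on the variance part, but adding $K^2\le \tfrac1{16}\tau^8\tau_0^4\|\Sigma-\Sigma_0\|_F^4$ produces a quartic term that does not fit inside $\tfrac32\tau^4\tau_0^2\|\Sigma-\Sigma_0\|_F^2$ without an additional smallness argument (e.g., $K\le 1$, which holds on the KL ball $B_{\epsilon_n}$ where the lemma is ultimately used but is not part of the lemma's hypotheses). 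To close the gap you need to either (a) reinstate the $K^2$ term and bound it, matching the paper's route, or (b) note explicitly that the centered version of $V$ suffices for the chosen version of the Ghosal--Ghosh--van der Vaart theorem --- but that contradicts the paper's stated definition, so (a) is the honest fix.
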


\begin{lemma}\label{lem:horseshoe_tail}
	If $a=b=1/2$ and $\tau_1^2 \asymp 1/( n p^4 \tau^2 )$,
	then
	\bea
	\pi_{ij}^u(x) &\ge& \sqrt{\frac{1}{2\pi^3}} \frac{\tau_1}{ x^2 }  ,
	\eea 
	for any $x >1$, where $\pi_{ij}^u(\sigma_{ij})$ is the unconstrained marginal prior density of $\sigma_{ij}$.  
\end{lemma}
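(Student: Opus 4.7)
The plan is to marginalize $\rho_{ij}$ explicitly, convert the resulting integral into the standard form $\int_0^\infty e^{-s}/(s+A)\,ds$ by two substitutions, and then use the single elementary inequality $1+u \le e^u$ to extract the desired polynomial tail.

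First, since $\mathrm{Beta}(1/2,1/2)$ has density $\pi^{-1}\rho^{-1/2}(1-\rho)^{-1/2}$, combining \eqref{sig_jk} and \eqref{rho_jk} and carrying out the substitution $\lambda = \rho_{ij}/(1-\rho_{ij})$ (which turns the $\mathrm{Beta}(1/2,1/2)$ mixing weight into the density $[\pi\sqrt{\lambda}(1+\lambda)]^{-1}$ on $(0,\infty)$) yields
\[
\pi_{ij}^u(x) \;=\; \frac{1}{\sqrt{2\pi^{3}}\,\tau_1}\int_0^\infty \frac{e^{-x^{2}/(2\tau_1^{2}\lambda)}}{\lambda(1+\lambda)}\,d\lambda.
\]
A further substitution $s = x^{2}/(2\tau_1^{2}\lambda)$ collapses this cleanly to
\[
\pi_{ij}^u(x) \;=\; \frac{1}{\sqrt{2\pi^{3}}\,\tau_1}\int_0^\infty \frac{e^{-s}}{s+A}\,ds,\qquad A \;:=\; \frac{x^{2}}{2\tau_1^{2}}.
\]

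Second, I would lower-bound this integral without invoking any special-function asymptotics. Applying $1+u \le e^u$ with $u = s/A$ gives $s+A \le A\,e^{s/A}$, so
\[
\int_0^\infty \frac{e^{-s}}{s+A}\,ds \;\ge\; \frac{1}{A}\int_0^\infty e^{-s(1+1/A)}\,ds \;=\; \frac{1}{A+1}.
\]
Under $\tau_1^{2} \asymp 1/(np^{4}\tau^{2})$ we have $2\tau_1^{2} \le 1$ for all sufficiently large $n$, so $x>1$ gives $x^{2} \ge 2\tau_1^{2}$ and therefore $A+1 = x^{2}/(2\tau_1^{2}) + 1 \le x^{2}/\tau_1^{2}$. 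Thus $(A+1)^{-1} \ge \tau_1^{2}/x^{2}$, which plugged into the previous display yields exactly $\pi_{ij}^u(x) \ge \sqrt{1/(2\pi^{3})}\,\tau_1/x^{2}$.

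No real obstacle is expected here; this is essentially a calculus exercise once the reparametrization is identified. The only care points are (a) tracking the normalizing constants $\pi$, $\sqrt{2\pi}$, and $\tau_1$ through both substitutions so that the final constant is precisely $1/\sqrt{2\pi^{3}}$ rather than some weaker value, and (b) noting that the joint hypotheses $x>1$ and $\tau_1^{2}\asymp 1/(np^{4}\tau^{2})$ force $A+1 \le x^{2}/\tau_1^{2}$ for all large $n$; both are immediate. It is worth remarking that the lemma is effectively the standard polynomial tail bound for the (horseshoe-type) scale mixture obtained here, but the proof route via $1+u\le e^u$ avoids any appeal to exponential-integral asymptotics.
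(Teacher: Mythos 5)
Your proof is correct, and it takes a genuinely different route from the paper. The paper reparametrizes to $\sigma_{ij}/\tau_1 \mid \lambda_{ij} \sim N(0,\lambda_{ij}^2)$ with $\lambda_{ij}\sim C^+(0,1)$ and then cites Theorem~1 of Carvalho, Polson and Scott (2010), which gives the lower bound $\pi_{ij}^u(x) \ge \tfrac{1}{2\tau_1}\sqrt{1/(2\pi^3)}\,\log\big(1+4\tau_1^2/x^2\big)$, and finishes with the elementary inequality $\log(1+u)\ge u/2$ for small $u$. You instead marginalize $\rho_{ij}$ explicitly, use the substitutions $\lambda=\rho/(1-\rho)$ and $s=x^2/(2\tau_1^2\lambda)$ to arrive at the exact representation $\pi_{ij}^u(x)=\tfrac{1}{\sqrt{2\pi^3}\,\tau_1}\int_0^\infty e^{-s}/(s+A)\,ds$ with $A=x^2/(2\tau_1^2)$, and then lower-bound the integral by $1/(A+1)$ via $1+u\le e^u$. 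Both arguments rely on $\tau_1$ being small (ensured by $\tau_1^2\asymp 1/(np^4\tau^2)$) together with $x>1$ in their final step; the quantitative conclusions coincide. The payoff of your version is self-containment: it avoids invoking the cited density bound (whose own proof goes through the exponential integral) and replaces it with a two-line calculus computation starting from the definition of the prior. The paper's version is shorter on the page but leans on an external black box. Either is acceptable; yours is the more transparent for a reader who has not internalized the Carvalho--Polson--Scott tail result.
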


The following theorem gives the lower bound for $\pi (  B_{\epsilon_n}  )$ in  \eqref{KL}.

\begin{theorem}[The lower bound for $\pi (  B_{\epsilon_n}  )$]\label{thm:KL}
	If $\sg_0 \in \calU(s_0,\tau_0)$ with $\tau_0 < \tau$, 
	$p \asymp n^{\beta}$ for some $0<\beta<1$,
	$\tau^4 \le p$,
	$\tau^2 \tau_0^2 \le  s_0  \log p $,
	$n \ge \max\big\{ 1   / \tau_0^4  , s_0  / (1-\tau_0/\tau)^2  \big\} \log p /\tau^4 $, 
	$p^{-1} <\lambda  < \log p /\tau_0$, 
	$a=b=1/2$ and $\tau_1^2  \asymp 1 / (n p^4 \tau^2)$,
	we have
	\bea
	\pi( B_{\epsilon_n} )  &\ge& \exp \Big\{ - \big( 5 + \frac{1}{\beta} \big)  n\epsilon_n^2  \Big\}  .
	\eea
\end{theorem}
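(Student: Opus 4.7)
The strategy is to apply Lemma \ref{KV.ineq} to reduce the $(K,V)$-neighborhood $B_{\epsilon_n}$ to a Frobenius neighborhood of $\Sigma_0$, and then to lower bound the prior mass of that neighborhood by an entrywise product. Concretely, Lemma \ref{KV.ineq} implies that any $\Sigma \in \calU(\tau)$ with $\|\Sigma - \Sigma_0\|_F^2 \le \tfrac{2}{3}\tau^{-4}\tau_0^{-2}\epsilon_n^2$ lies in $B_{\epsilon_n}$. I would therefore consider the entrywise event
\[ E_n = \bigl\{\Sigma : |\sigma_{jj} - \sigma_{0,jj}| \le \eta_0\ \forall j,\ |\sigma_{jk}| \le \eta_1\ \forall (j,k)\in \bar S,\ |\sigma_{jk} - \sigma_{0,jk}| \le \eta_2\ \forall (j,k)\in S \bigr\}, \]
where $S$ is the off-diagonal support of $\Sigma_0$, with tolerances $\eta_0^2, \eta_2^2 \asymp \log p/(n\tau^4\tau_0^2)$ and $\eta_1^2 \asymp \log p/(n p^2 \tau^4\tau_0^2)$ chosen so that $\Sigma \in E_n$ forces the Frobenius bound. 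Using $\|\Sigma-\Sigma_0\|_{op}\le \|\Sigma-\Sigma_0\|_F$ together with Weyl's inequality, and the hypotheses $\tau_0 < \tau$ and $n \ge s_0 \log p/[\tau^4(1-\tau_0/\tau)^2]$, one checks that $E_n \subset \calU(\tau)$. Hence $E_n \subset B_{\epsilon_n} \cap \calU(\tau)$ and
\[ \pi(B_{\epsilon_n})\ \ge\ \frac{\pi^u(E_n)}{\pi^u(\calU(\tau))}. \]

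The denominator is bounded below directly by Lemma \ref{lower.bound}: $-\log \pi^u(\calU(\tau)) \le p[\log(8/(\lambda\tau)) + \lambda\tau/4 + C/\sqrt n] = O(p\log p)$, using $\lambda > 1/p$ and $\lambda\tau \le \log p$. The numerator factorizes over entries under $\pi^u$. Each diagonal factor is at least $\lambda \eta_0 e^{-\lambda\tau_0/2}$, since the $\mathrm{Gamma}(1,\lambda/2)$ density at $\sigma_{0,jj}\in[\tau_0^{-1},\tau_0]$ is bounded below by $(\lambda/2)e^{-\lambda\tau_0/2}$. Each zero-off-diagonal factor is at least $1 - O(\tau_1/\eta_1) = 1 - O(1/(p\tau\sqrt{\log p}))$, obtained from the horseshoe tail upper bound dual to Lemma \ref{lem:horseshoe_tail}. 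Each nonzero-off-diagonal factor is at least a constant multiple of $\tau_1 \eta_2/\max(\tau_0,1)^2$, by applying Lemma \ref{lem:horseshoe_tail} on the interval around $\sigma_{0,jk}$ when $|\sigma_{0,jk}|>1$ and, when $|\sigma_{0,jk}|\le 1$, invoking the classical logarithmic singularity of the horseshoe density near the origin.

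Taking logarithms and plugging in $\tau_1^2 \asymp 1/(np^4\tau^2)$ together with $\log n = \log p/\beta$, the four contributions to $-\log \pi(B_{\epsilon_n})$ are $O((1/\beta)\,p\log p)$ from diagonals, $O(p/(\tau\sqrt{\log p}))$ from zero off-diagonals (negligible), $O((1/\beta)\,s_0 \log p)$ from nonzero off-diagonals, and $O(p\log p)$ from the denominator. Each is a bounded multiple of $n\epsilon_n^2 = (p+s_0)\log p$; summing the constants carefully---using the side hypotheses $\tau^4 \le p$, $\tau^2\tau_0^2 \le s_0 \log p$, and the lower bound on $n$---yields the required prefactor $5 + 1/\beta$.

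The main obstacle is the entrywise density lower bound at nonzero $\sigma_{0,jk}$. Lemma \ref{lem:horseshoe_tail} only covers $|x|>1$, whereas $|\sigma_{0,jk}|$ may be any value in $(0,\tau_0]$ and in particular arbitrarily close to zero. One must therefore supplement the tail bound with the near-origin concentration of the horseshoe to obtain a uniform density bound on any bounded interval, and then track the constants carefully enough that the per-coordinate $\log \tau_1^{-1} \asymp (1/\beta + 2)\log p$ accumulates only over the $s_0$ nonzero off-diagonals (not all $p^2/2$ of them) and that the positive-definiteness penalty $[\pi^u(\calU(\tau))]^{-1}$ contributes only the milder rate $O(p\log p)$.
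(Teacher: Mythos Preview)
Your outline follows the paper's proof closely---reduce via Lemma \ref{KV.ineq} to a Frobenius ball, pass to an entrywise box $E_n$, verify $E_n\subset\calU(\tau)$ by Weyl, and factorize $\pi^u(E_n)$---but there is one wrong step and one unnecessary complication.

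\textbf{The denominator step is backwards.} From $E_n\subset\calU(\tau)$ you correctly get $\pi(B_{\epsilon_n})\ge\pi(E_n)=\pi^u(E_n)/\pi^u(\calU(\tau))$. To \emph{lower}-bound this ratio you would need an \emph{upper} bound on $\pi^u(\calU(\tau))$, but Lemma \ref{lower.bound} supplies only a lower bound, so your invocation of it goes in the wrong direction. The paper simply uses the trivial bound $\pi^u(\calU(\tau))\le 1$, giving $\pi(B_{\epsilon_n})\ge\pi^u(E_n)$ directly; the ``$O(p\log p)$ from the denominator'' term should therefore not appear in your ledger at all. This does not damage the final inequality (dropping a nonnegative term only helps), but the argument as written is logically incorrect.

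\textbf{The small-signal worry is not an obstacle.} You flag the case $|\sigma_{0,jk}|\le 1$ as needing the near-origin logarithmic singularity of the horseshoe. The paper avoids this entirely by using that the horseshoe marginal is symmetric and decreasing in $|x|$: on any interval contained in $[-2\tau_0,2\tau_0]$ the density is bounded below by $\pi^u_{ij}(2\tau_0)$, and since $\tau_0>1$ Lemma \ref{lem:horseshoe_tail} applies at $x=2\tau_0$. Thus a single uniform bound $\pi^u_{ij}(2\tau_0)\ge c\,\tau_1/\tau_0^2$ handles all nonzero off-diagonals, regardless of how small $|\sigma_{0,jk}|$ is. With this simplification and the denominator dropped, the paper's bookkeeping yields $(3+1/(2\beta))p\log p$ from diagonals, $O(p)$ from zero off-diagonals, and $(5+1/\beta)s_0\log p$ from nonzero off-diagonals, giving the stated constant.
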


\begin{proof}[Proof of Theorem \ref{thm:conv_rate}]
	The proof follows from Theorems \ref{thm:packing}, \ref{thm:sieve}, \ref{thm:KL} and Lemma \ref{lem:2000}, with $c_1 = 28 + 3/\beta$.
	Note that $\lambda > p^{-1}$ and $\tau^4 \le \min \{ p , s_0  \log p , (\log p)^4 / \lambda^4 \}$ hold for all sufficiently large $n$ because we assume $\tau = O(1)$ and $\lambda=O(1)$. 
	\hfill $\blacksquare$ 
\end{proof}

\section*{Appendix B: Proof of auxiliary results}
\begin{lemma}\label{lem:fro_ineq}
	For any $p\times p$ matrices $A$ and $B$, we have
	\bea
	\|A B \|_F &\le& \| A\| \, \|B\|_F .
	\eea
\end{lemma}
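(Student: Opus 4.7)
The plan is to expand the Frobenius norm column-by-column and invoke the defining property of the spectral norm on each column separately. Let me describe how I would organize the argument.

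First, I would write $B$ in terms of its columns as $B = [b_1, b_2, \ldots, b_p]$, so that $AB = [Ab_1, Ab_2, \ldots, Ab_p]$. Since the squared Frobenius norm of a matrix is the sum of squared Euclidean norms of its columns, this gives
\[
\|AB\|_F^2 \;=\; \sum_{j=1}^p \|Ab_j\|_2^2 \qquad\text{and}\qquad \|B\|_F^2 \;=\; \sum_{j=1}^p \|b_j\|_2^2.
\]

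Next, I would apply the variational characterization of the spectral norm, namely $\|Ax\|_2 \le \|A\|\,\|x\|_2$ for every vector $x$, to each column $b_j$. Squaring and summing over $j$ yields
\[
\sum_{j=1}^p \|Ab_j\|_2^2 \;\le\; \|A\|^2 \sum_{j=1}^p \|b_j\|_2^2 \;=\; \|A\|^2\,\|B\|_F^2,
\]
and taking square roots completes the proof.

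There is no real obstacle here: the lemma is a standard submultiplicative-type inequality and the only ingredient beyond definitions is the inequality $\|Ax\|_2 \le \|A\|\,\|x\|_2$, which is the definition of the operator (spectral) norm. An equivalent route, if preferred, is via traces: writing $\|AB\|_F^2 = \mathrm{tr}(A^\top A\,BB^\top)$ and using that $A^\top A \preceq \|A\|^2 I$ together with the PSD-monotonicity of the trace gives the same bound in one line. Either route suffices.
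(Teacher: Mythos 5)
Your argument is correct and is essentially identical to the paper's proof: both decompose $B$ into columns, write $\|AB\|_F^2 = \sum_j \|Ab_j\|_2^2$, and bound each term by $\|A\|^2\|b_j\|_2^2$ via the operator-norm inequality. The alternative trace route you mention is a valid variant but not needed.
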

\begin{proof}
	Let $b_j$ be the $j$th column of $B$. 
	Then,
	\bea
	\|AB \|_F^2 &=& \|   ( Ab_1 , \ldots, Ab_p ) \|_F^2 \\
	&=& \sum_{j=1}^p \|A b_j\|_2^2   \\
	&\le& \|A\|^2 \sum_{j=1}^p \|b_j\|_2^2 \\
	&=& \|A\|^2\,  \|B\|_F^2  .  \quad \blacksquare
	\eea
\end{proof}

\begin{proof}[Proof of Theorem \ref{thm:packing}]
	Note that the $\epsilon_n$-packing number $D(\epsilon_n, \calP_n , d )$ is the maximal number of points in $\calP_n$ such that the distance between each pair is greater than or equal to $\epsilon_n$.
	By Lemma A.1  in \cite{banerjee2015bayesian}, 
	\bea
	d(f_{\sg_1} , f_{\sg_2} ) 
	&\le&  C\tau \| \Omega_1 - \Omega_2\|_F 
	\eea
	for some constant $C>0$ and any $\sg_1, \sg_2 \in \mathcal{U}(\delta_n, s_n, L_n,\tau)$, where $\Omega_i = \sg_i^{-1}$ for $i=1,2$.
	Further note that for any $\sg_1, \sg_2 \in \mathcal{U}(\delta_n, s_n, L_n,\tau)$,
	\bea
	\| \Omega_1 - \Omega_2\|_F  &\le& \|\Omega_1\| \, \|\Omega_2\| \, \| \sg_1 - \sg_2\|_F \\
	&\le& \tau^2  \| \sg_1 - \sg_2\|_F,
	\eea
	which gives
	\bea
	d(f_{\sg_1} , f_{\sg_2} ) 
	&\le& C\tau^3   \| \sg_1 - \sg_2\|_F  .
	\eea
	By the definition of the $\epsilon_n$-packing number  it implies that 
	\bea
	\log D(  \epsilon_n, \calP_n , d ) 
	&\le& \log  D(  \epsilon_n /(C\tau^3) ,  \mathcal{U}(\delta_n, s_n, L_n,\tau) , \|\cdot\|_F )  \\
	&\le& \log \Big\{  \Big(  \frac{C L_n\tau^3}{ \epsilon_n} \Big)^{p} \sum_{j=1}^{s_n} \Big(  \frac{2C L_n\tau^3}{\epsilon_n} \Big)^{j} \binom{\binom{p}{2} }{j}  \Big\} \\
	&\le&  p \log  \Big(  \frac{C L_n\tau^3}{ \epsilon_n} \Big) +   \log \Big\{  \sum_{j=1}^{s_n} \Big(  \frac{2C L_n\tau^3 }{ \epsilon_n} \Big)^{j}  \Big( \frac{p^2}{2} \Big)^j \Big\}        \\
	&\le& p \log  \Big(  \frac{CL_n\tau^3}{ \epsilon_n} \Big) +   s_n  \log  \Big(  \frac{2C L_n\tau^3 p^2 }{ \epsilon_n} \Big)       \\
	&\le& (p+ s_n) \log (2C L_n) +(p+ s_n) \log \tau^3 + (p+ s_n) \log (1/\epsilon_n ) + 2s_n \log p  \\
	&\le& 3 s_n \log p   + \frac{3}{4} (p+ s_n) \log p  +  \frac{1}{2} (p+ s_n) \log \frac{n}{(p+s_0)\log p} + 2 s_n \log p     \\
	&\le&  3 s_n \log p   + \frac{3}{4} (p+ s_n) \log p  +  \frac{1}{2\beta} (p+ s_n) \log p  + 2 s_n \log p \\
	&\le&  \{6 + 1/(2\beta) \} ( s_n /c_1 + s_n) \log p \,\,=\,\, (12 + 1/\beta )   c_1  n\epsilon_n^2
	\eea
	for all sufficiently large $n$, because $c_1>1$, $\tau^4 \le p$, $p \asymp n^{\beta}$,  $\delta_n = \epsilon_n / \tau^3$, $s_n = c_1 n\epsilon_n^2 / \log p$ and $L_n = c_2 n\epsilon_n^2$.   
	\hfill $\blacksquare$
\end{proof}

\begin{proof}[Proof of Lemma \ref{lower.bound}]
	By Gershgorin circle theorem,  every eigenvalue of $\sg$ lies within as least one of $[ \sigma_{jj}- \sum_{k \neq j} | \sigma_{kj} | , \sigma_{jj}+\sum_{k \neq j} | \sigma_{kj} |  ]$ for $j=1,\ldots,p$ \citep{brualdi1994regions}.
	Thus, it suffices to show 
	\bea 
	\pi^u(  \Sigma \in \mathcal{U}(\tau) ) &\ge& \pi^u \Big( \min_j\big(\sigma_{jj} - \sum_{k \neq j} | \sigma_{kj} |\big) > 0,  \,\, \tau^{-1}\le \lambda_{\min}(\sg)\le \lambda_{\max}(\sg)\le \tau  \Big)  .
	\eea
	On the event $\min_j\big(\sigma_{jj} - \sum_{k \neq j} | \sigma_{kj} |\big) > 0$, we have
	\bea
	\lambda_{\max}(\sg) 
	&\le& \|\sg\|_1 \\
	&=& \max_j \big(  \sigma_{jj} + \sum_{k \neq j} |\sigma_{kj}|  \big)  \\
	&\le& \max_j 2\sigma_{jj} 
	\eea
	and
	\bea
	\lambda_{\min}(\sg)
	&\ge& \min_j \big( \sigma_{jj} - \sum_{k \neq j} |\sigma_{kj}|   \big) .
	\eea
	Therefore,
	\bea
	&& \pi^u(  \Sigma \in \mathcal{U}(\tau) ) \\
	&\ge& \pi^u \Big(   \tau^{-1} \le  \min_j \big( \sigma_{jj} - \sum_{k \neq j} |\sigma_{kj}|   \big) \le 2\max_j \sigma_{jj} \le \tau     \Big) \\
	&=& \pi^u \Big(   \tau^{-1} \le  \min_j \big( \sigma_{jj} - \sum_{k \neq j} |\sigma_{kj}|   \big) \le 2\max_j \sigma_{jj} \le \tau   \,\,\big|\,\,   \max_{k\neq j}|\sigma_{kj}| < (\tau p)^{-1}   \Big) 
	\pi^u( \max_{k\neq j}|\sigma_{kj}| < (\tau p)^{-1} )  \\
	&\ge& \pi^u \Big(   \tau^{-1} \le  \min_j \big( \sigma_{jj} - \tau^{-1}\big) \le 2\max_j \sigma_{jj} \le \tau   \,\,\big|\,\,   \max_{k\neq j}|\sigma_{kj}| < (\tau p)^{-1}   \Big) 
	\pi^u( \max_{k\neq j}|\sigma_{kj}| < (\tau p)^{-1}  )  \\
	&=& \pi^u \Big(   \tau^{-1} \le  \min_j \big( \sigma_{jj} - \tau^{-1}\big) \le 2\max_j \sigma_{jj} \le \tau     \Big) 
	\pi^u( \max_{k\neq j}|\sigma_{kj}| < (\tau p)^{-1}  ) .
	\eea
	Note that
	\bea
	\pi^u \Big(   \tau^{-1} \le  \min_j \big( \sigma_{jj} - \tau^{-1}\big) \le 2\max_j \sigma_{jj} \le \tau     \Big) 
	&\ge&  \pi^u \Big(   2\tau^{-1} \le   \sigma_{jj}  \le \tau /2  ,\,\, \forall j  \Big) \\
	&=& \prod_{j=1}^p \pi^u \Big(   2\tau^{-1}  \le   \sigma_{jj}  \le \tau /2    \Big)  \\
	&\ge&  \Big\{   \big( \frac{\tau}{2} - 2\tau^{-1}    \big) \frac{\lambda}{2} \exp \big( - \frac{\lambda \tau}{4} \big)  \Big\}^p \\
	&\ge&  \Big\{   \frac{\lambda \tau}{8} \exp \big( - \frac{\lambda \tau}{4} \big)  \Big\}^p   
	\eea
	because $\tau>3$.
	Furthermore, by Theorem 1 in \cite{carvalho2010horseshoe} and the change of variables,	
	\bean\label{tail_prob}
	\pi^u(\sigma_{kj} ) &\le&  \frac{1}{\tau_1 \sqrt{2\pi^3}} \log \Big( 1+ \frac{2\tau_1^2 }{\sigma_{kj}^2} \Big)
	\eean
	for any $\sigma_{kj}\neq 0$, which implies 
	\bea
	\pi^u(  |\sigma_{kj}| \ge (\tau p)^{-1}  ) 
	&\le& \frac{1}{\tau_1} \sqrt{\frac{2}{\pi^3}} \int_{(\tau p)^{-1}}^\infty \log  \Big( 1+ \frac{2\tau_1^2}{x^2} \Big) dx \\
	&\le&  \sqrt{\frac{2}{\pi^3}} \int_{(\tau p)^{-1}}^\infty  \frac{2\tau_1}{x^2} dx \\
	&=& \frac{2\sqrt{2}}{\sqrt{\pi^3}}  \tau_1 \tau p  .
	\eea
	Thus, we have
	\bea
	\pi^u( \max_{k\neq j} |\sigma_{kj}| < (\tau p)^{-1}  ) 
	&=& \prod_{k\neq j} \Big\{   1 - \pi^u(  |\sigma_{kj}| \ge (\tau p)^{-1}  ) \Big\}  \\
	&\ge&  \Big(  1 -  \frac{2\sqrt{2}}{\sqrt{\pi^3}}  \tau_1 \tau p   \Big)^{p^2}  \\
	&\ge& \exp \Big(  - \frac{4\sqrt{2}}{\sqrt{\pi^3}}  \tau_1 \tau p^3   \Big)  \\
	&=& \exp \Big( - C \frac{p}{\sqrt{n}}  \Big) 
	\eea
	for some constant $C>0$, because $\tau_1^2  \asymp 1/( n p^4 \tau^2 )$. 
	\hfill $\blacksquare$
\end{proof}

\begin{proof}[Proof of Theorem \ref{thm:sieve}]
	Note that 
	\bea
	\pi(\calP_n^c)
	&\le&  \pi ( |s(\sg, \delta_n)| > s_n )  + \pi ( \|\sg\|_{\max} > L_n )    .
	\eea
	First, we focus on the upper bound for $\pi ( |s(\sg, \delta_n)| > s_n ) $.
	For any $1\le k \neq j \le p$, by applying inequality \eqref{tail_prob},  we have 
	\bea
	\nu_n \,\,\equiv\,\,  \pi^u ( |\sigma_{kj}| > \delta_n )
	&\le&  \frac{2\sqrt{2} }{ \sqrt{\pi^3} } \tau_1 \delta_n^{-1}  \\
	&\le& \frac{C}{\sqrt{n} \tau p^{2} }   \frac{\tau^3 \sqrt{n} }{\sqrt{(p+s_0)\log p} }    \\
	&=&    \frac{C \tau^2 }{ \sqrt{p^4 (p+s_0)\log p} }     
	\eea
	for some constant $C>0$, because $\tau_1^2 \asymp 1/(n p^4 \tau^2)$.
	Thus,
	\bea
	\pi^u ( |s(\sg, \delta_n)| > s_n ) 
	&=& \bbP \Big(  B\Big( \binom{p}{2} , \nu_n \Big)  > s_n    \Big)   \\
	&\le& 1 - \Phi  \Big(  \sqrt{2 \binom{p}{2}  H \Big(\nu_n ,  s_n/ \binom{p}{2}  \Big) }  \Big)
	\eea
	by Lemma A.3 of \cite{song2018nearly}, provided $s_n  > \binom{p}{2}  \nu_n$, where $\Phi$ is the cdf of $N(0,1)$ and
	\bea
	H( \nu, k/n ) &=& (k/n) \log \{k/ (n\nu)\} + (1-k/n) \log \{ (1-k/n) / (1-\nu) \}.
	\eea
	Note that the condition $s_n  > \binom{p}{2}  \nu_n$ is met because we assume that $\tau^4  \ll (p+s_0)^2 \log p$. 
	
	Note that
	\bea
	1 - \Phi  \Big(  \sqrt{2 \binom{p}{2}  H \Big(\nu_n ,  s_n/ \binom{p}{2}  \Big) }  \Big) &\le& \frac{ \exp \big[ - \binom{p}{2} H\{\nu_n, s_n/\binom{p}{2}  \} \big] }{ \sqrt{2\pi}\sqrt{2 \binom{p}{2} H\{ \nu_n,  s_n/ \binom{p}{2}  \} } },
	\eea
	where
	\bea
	\binom{p}{2}  H \Big( \nu_n, s_n/\binom{p}{2}  \Big)  &=& s_n \log \Big( \frac{s_n}{\binom{p}{2} \nu_n} \Big) + \Big\{ \binom{p}{2}  - s_n\Big\} \log \Big( \frac{\binom{p}{2}  - s_n}{\binom{p}{2}  - \binom{p}{2} \nu_n} \Big).
	\eea
	We have
	\bea
	s_n \log \Big( \frac{s_n}{\binom{p}{2} \nu_n} \Big)
	&\ge&  s_n \log \Big( c_1 C  \sqrt{\frac{ (p+s_0)^3 \log p }{\tau^4}}   \Big) \\
	&\ge& s_n \log \Big(     \sqrt{p+s_0}   \Big)  \\
	&\ge&  s_n  (\log p) /2   \,\,=\,\,   c_1  n \epsilon_n^2 /2
	\eea
	for some constant $C>0$ because $\tau^4  \ll (p+s_0)^2 \log p$ and $c_1 > 1$, and
	\bea
	\Big\{ \binom{p}{2}  - s_n\Big\} \log \Big( \frac{\binom{p}{2}  - s_n}{\binom{p}{2}  - \binom{p}{2} \nu_n} \Big)  
	&=& \Big\{ \binom{p}{2}  - s_n\Big\} \log \Big( 1 - \frac{s_n - \binom{p}{2}   \nu_n }{\binom{p}{2}( 1 -  \nu_n)} \Big)   \\
	&\ge& -\frac{1}{2}\Big\{ \binom{p}{2}  - s_n\Big\}  \frac{s_n - \binom{p}{2}   \nu_n }{\binom{p}{2}( 1 -  \nu_n)}  \\
	&\ge& - \frac{1}{2} \Big\{ 1 - s_n /\binom{p}{2} \Big\}  \frac{ s_n   }{   1 -  \nu_n }  \\
	&\gtrsim&  -s_n \Big\{ 1 - c_1 (p+s_0) /p^2 \Big\} \,\,\gtrsim\,\, -  \frac{c_1 n \epsilon_n^2}{\log p} 
	\eea
	for all sufficiently large $n$.
	Therefore, due to Lemma \ref{lower.bound} and $\lambda \tau \le  \log p $,
	\bea
	\pi ( |s(\sg, \delta_n)| > s_n ) 
	&\le& \pi^u ( |s(\sg, \delta_n)| > s_n )  /\pi^u( \sg \in \calU(\tau) ) \\
	&\le& \exp \big(  -  c_1 n \epsilon_n^2 /3  \big) /\pi^u( \sg \in \calU(\tau) ) \\
	&\le& \Big\{ \frac{8}{\lambda \tau} \exp \big( \frac{\lambda \tau}{4} + \frac{C }{\sqrt{n} } \big)  \Big\}^p \exp \big(  -  c_1 n \epsilon_n^2/3 \big) \\
	&\le& \exp   \big(  -  c_1 n \epsilon_n^2/3  + p \log p /3 \big) \\
	&\le& \exp   \big\{  - ( c_1- 1) n \epsilon_n^2/3   \big\}
	\eea
	for some constant $C>0$ and all sufficiently large $n$.
	
	Now we focus on the upper bound for the second term $\pi ( \|\sg\|_{\max} > L_n ) $.
	Since $\pi ( \sg:  \lambda_{\max}(\sg)\le \tau )  =1$ and $\|\sg\|_{\max}  \le \lambda_{\max}(\sg)$, it means that  $\pi ( \|\sg\|_{\max}  > L_n )  =0$ for all sufficiently large $n$, because $L_n\to\infty$ as $n\to\infty$.
	It completes the proof. 
	\hfill $\blacksquare$
\end{proof}

\begin{proof}[Proof of Lemma \ref{KV.ineq}]
	In the proof, we follow the calculation of \cite{banerjee2015bayesian}. 
	Let $d_i$'s be the  eigenvalues of ${\sg_0}^\half \Sigma^{-\half} {\sg_0}^\half$. 
	By Lemma A.1 in \cite{banerjee2015bayesian}, we obtain 
	\bea
	\sum_{i=1}^p (1-d_i)^2 &\le& \tau^2 \|{\sg_0}^{-1} - \Sigma^{-1} \|_F^2.
	\eea
	Also,
	\bea
	\|{\sg_0}^{-1} - \Sigma^{-1} \|_F &\le& \|\sg^{-1}  \| \, \|{\sg_0}^{-1}  \| \,  \|\Sigma - \sg_0 \|_F \\
	&\le& \tau \tau_0   \|\Sigma - \sg_0 \|_F  . 
	\eea
	Following the calculation of \cite{banerjee2015bayesian}, we obtain 
	\begin{eqnarray*}
		K(f_{\sg_0}, f_{\Sigma})  & = & -\half \sum_{i=1}^p \log d_i - \half \sum_{i=1}^p (1-d_i) \\
		& \leq & \sum_{i=1}^p (1-d_i)^2  \\
		& \leq & \tau^2 \|{\sg_0}^{-1} - \Sigma^{-1} \|_F^2 \\
		& \leq &   \tau^4 \tau_0^2 \|\Sigma - \sg_0 \|_F^2. 
	\end{eqnarray*}
	Similarly, we obtain
	\begin{eqnarray*} 
		V(f_{\sg_0}, f_{\Sigma} ) & = & \half \sum_{i=1}^p (1-d_i)^2 + K(f_{\sg_0}, f_{\Sigma} )^2 \\
		& \leq & \frac{3}{2}  \sum_{i=1}^p (1-d_i)^2 \\
		& \leq & \frac{3}{2}  \tau^2 \|{\sg_0}^{-1} - \Sigma^{-1} \|_F^2 \\
		& \leq & \frac{3}{2} \tau^4 \tau_0^2  \|\Sigma - \sg_0 \|_F^2.
	\end{eqnarray*}
	This completes the proof. \hfill $\blacksquare$
\end{proof}

\begin{proof}[Proof of Lemma \ref{lem:horseshoe_tail}]
	Because we have $a=b=1/2$,
	\bea
	\sigma_{ij}/\tau_1 \mid \rho_{ij} &\sim& N\Big(  0, \frac{\rho_{ij}}{1-\rho_{ij}} \Big) ,\\
	\rho_{ij} &\sim& Beta(a,b)
	\eea
	is equivalent to
	\bea
	\sigma_{ij}/\tau_1 \mid \lambda_{ij} &\sim& N \Big(  0, \lambda_{ij}^2 \Big)  , \\
	\lambda_{ij} &\sim& C^+ (0,1) ,
	\eea
	where $C^+ (0,s)$ denotes the standard half-Cauchy distribution on positive real with a scale parameter $s$.
	Then, by Theorem 1 in \cite{carvalho2010horseshoe} and the change of variables, 
	\bea
	\pi_{ij}^u ( x ) 
	&\ge& \frac{1}{2\tau_1} \sqrt{\frac{1}{2\pi^3}}  \log \Big( 1+ \frac{4\tau_1^2 }{x^2}  \Big)  \\
	&\ge& \frac{1}{4\tau_1} \sqrt{\frac{1}{2\pi^3}}  \frac{4\tau_1^2 }{x^2}   \\
	&\ge& \sqrt{\frac{1}{2\pi^3}} \frac{\tau_1}{x^2} 
	\eea
	for any $x>1$, because $\tau_1 \asymp 1/(n p^4 \tau^2)$.  \quad $\blacksquare$
\end{proof}

\begin{proof}[Proof of Theorem \ref{thm:KL}]
	By Lemma \ref{KV.ineq}, it suffices to show that 
	\bea
	\pi \Big( \|\sg- \sg_0\|_F^2 \le \frac{2}{3 \tau^4 \tau_0^2 }\epsilon_n^2  \Big)
	&\ge& \exp \big( - Cn\epsilon_n^2 \big)  .
	\eea
	Note that
	\bea
	&& \pi \Big( \|\sg- \sg_0\|_F^2 \le \frac{2}{3 \tau^4 \tau_0^2 }\frac{(p+s_0)\log p}{n}  \Big)\\
	&\ge& \pi \Big( \sum_{i\neq j}(\sigma_{ij}-\sigma_{ij}^*)^2 \le  \frac{2}{3 \tau^4 \tau_0^2 }\frac{s_0 \log p}{n}  , \,\, \sum_{j=1}^p(\sigma_{jj}-\sigma_{jj}^*)^2 \le  \frac{2}{3 \tau^4 \tau_0^2 }\frac{p \log p}{n}     \Big)   \\
	&\ge&  \pi \Big( \max_{i\neq j}(\sigma_{ij}-\sigma_{ij}^*)^2 \le  \frac{2}{3 \tau^4 \tau_0^2 }\frac{s_0 \log p}{p(p-1) n}  , \,\,  \max_{1\le j \le p}(\sigma_{jj}-\sigma_{jj}^*)^2 \le  \frac{2}{3 \tau^4 \tau_0^2 }\frac{\log p}{n}     \Big)   \\
	&\equiv& \pi ( A_{n, \sg_0} )  ,
	\eea
	where $\Sigma_0 = ( \sigma_{ij}^*)$.
	By Weyl's theorem,  if $\sg \in A_{n, \sg_0}$,
	\bea
	\lambda_{\min}(\sg) 
	&\ge& \lambda_{\min}(\sg_0) - \| \sg - \sg_0 \| \\
	&\ge& \lambda_{\min}(\sg_0) - \| \sg - \sg_0 \|_1  \\
	&\ge& \tau_0^{-1}  - p\sqrt{\frac{2}{3 \tau^4 \tau_0^2 }\frac{s_0 \log p}{p(p-1) n} }  - \sqrt{\frac{2}{3 \tau^4 \tau_0^2 }\frac{\log p}{n}   } \\
	&\ge& \tau^{-1}
	\eea
	and
	\bea
	\lambda_{\max}(\sg) 
	&\le& \lambda_{\max}(\sg_0) + \| \sg - \sg_0 \| \\ 
	&\le& \tau_0 + \|\sg - \sg_0 \|_1 \\
	&\le& \tau_0  + p\sqrt{\frac{2}{3 \tau^4 \tau_0^2 }\frac{s_0 \log p}{p(p-1) n} }  + \sqrt{\frac{2}{3 \tau^4 \tau_0^2 }\frac{\log p}{n}   }  \\
	&\le& \tau 
	\eea
	for all sufficiently large $n$, because $\sg_0 \in \calU(s_0,\tau_0)$, $\tau_0 < \tau$ and $\tau^4 (1-\tau_0/\tau)^2 n \ge s_0 \log p$.
	Thus, if $\sg \in A_{n,\sg_0}$, then we have $\sg \in \calU(\tau)$.
	Because 
	\bea
	{\pi}( \Sigma ) &=& \frac{\pi^u(\Sigma) I (\Sigma \in \mathcal{U}(\tau) ) }{ \pi^u( \Sigma \in \mathcal{U}(\tau)  ) } ,
	\eea
	we have 
	\bea
	\pi ( A_{n, \sg_0} )  &\ge& \pi^u ( A_{n, \sg_0} )  .
	\eea
	
	Note that 
	\bea
	\pi^u ( A_{n, \sg_0} ) 
	&=& \pi^u \Big( \max_{i\neq j}(\sigma_{ij}-\sigma_{ij}^*)^2 \le  \frac{2}{3 \tau^4 \tau_0^2 }\frac{s_0 \log p}{p(p-1) n}  \Big) \times \pi^u \Big(  \max_{1\le j \le p}(\sigma_{jj}-\sigma_{jj}^*)^2 \le  \frac{2}{3 \tau^4 \tau_0^2 }\frac{\log p}{n}     \Big)  \\
	&=& \prod_{i<j}\pi^u \Big( (\sigma_{ij}-\sigma_{ij}^*)^2 \le  \frac{2}{3 \tau^4 \tau_0^2 }\frac{s_0 \log p}{p(p-1) n}  \Big) \times \prod_{j=1}^p \pi^u \Big(  (\sigma_{jj}-\sigma_{jj}^*)^2 \le  \frac{2}{3 \tau^4 \tau_0^2 }\frac{\log p}{n}     \Big)   
	\eea
	and
	\bea
	\prod_{j=1}^p \pi^u \Big(  (\sigma_{jj}-\sigma_{jj}^*)^2 \le  \frac{2}{3 \tau^4 \tau_0^2 }\frac{\log p}{n}     \Big)   
	&\ge& \prod_{j=1}^p 2 \sqrt{\frac{2}{3 \tau^4 \tau_0^2 }\frac{\log p}{n} } \, \, \frac{\lambda}{2}  \exp \Big\{ - \frac{\lambda}{2} \Big(  \sigma_{jj}^* + \sqrt{\frac{2}{3 \tau^4 \tau_0^2 }\frac{\log p}{n} }  \Big)  \Big\} \\
	&\ge& \left[ \lambda\sqrt{\frac{2}{3 \tau^4 \tau_0^2 }\frac{\log p}{n} }  \exp \Big\{ - \frac{\lambda}{2} \Big(  \tau_0 + \sqrt{\frac{2}{3 \tau^4 \tau_0^2 }\frac{\log p}{n} }  \Big)  \Big\} \right]^p  \\
	&\ge& \exp \Big\{   -p \lambda \tau_0  - p \log \Big( \frac{\sqrt{3\tau^4\tau_0^2 n}}{\lambda \sqrt{2\log p}}  \Big)   \Big\}    \\
	&\ge& \exp \Big\{  - p\log p - p \log \Big( \frac{\tau^3 p^{1/(2\beta)} }{\lambda} \Big)  \Big\}   \\
	&\ge& \exp \Big\{  - p\log p - \big(2 + \frac{1}{2\beta} \big) p \log p  \Big\}  \\
	&=& \exp \Big\{   - \big(3 + \frac{1}{2\beta} \big) p \log p  \Big\} 
	\eea
	for all sufficiently large $n$,
	because $\log p  /( \tau^4 \tau_0^4  ) \le  n $, $\tau^4 \le p$ and $p^{-1} <\lambda  < \log p /\tau_0$.
	Furthermore,
	\bean
	&& \prod_{i<j}\pi^u \Big( (\sigma_{ij}-\sigma_{ij}^*)^2 \le  \frac{2}{3 \tau^4 \tau_0^2 }\frac{s_0 \log p}{p(p-1) n}  \Big) \nonumber\\
	&\ge& \prod_{(i,j)\in s(\sg_0) } \pi^u  \Big( (\sigma_{ij}-\sigma_{ij}^*)^2 \le  \frac{2}{3 \tau^4 \tau_0^2 }\frac{s_0 \log p}{p(p-1) n}  \Big) 
	\prod_{(i,j)\notin s(\sg_0), i<j } \pi^u  \Big( \sigma_{ij}^2 \le  \frac{2}{3 \tau^4 \tau_0^2 }\frac{s_0 \log p}{p(p-1) n}  \Big) . \,\,\,\,\label{lbound}
	\eean
	The second term in \eqref{lbound} is bound below by
	\bea
	\prod_{(i,j)\notin s(\sg_0), i<j } \left\{  1 -\pi^u  \Big( \sigma_{ij}^2 >  \frac{2}{3 \tau^4 \tau_0^2 }\frac{s_0 \log p}{p(p-1) n}  \Big)  \right\}  
	&\ge&   \left\{  1 -  2\tau_1 \sqrt{\frac{2}{\pi^3} } \sqrt{\frac{3 \tau^4 \tau_0^2 }{2} \frac{p(p-1) n}{s_0 \log p}}  \right\}^{p^2}   \\
	&\ge&  \exp \Big(  - 4\sqrt{\frac{3}{\pi^3}}  \tau_1 \tau^2 \tau_0  p^3 \sqrt{\frac{n}{s_0 \log p}}    \Big) \\
	&\ge& \exp \Big(  - C  \tau \tau_0  p \sqrt{\frac{1}{s_0 \log p}}   \Big)  \\
	&\ge& \exp (  -C p   )
	\eea
	for some constant $C>0$, because $\tau^2 \tau_0^2 \le  s_0  \log p $ and $\tau_1^2 \asymp 1/(n p^4  \tau^2 )$.
	Let $\pi_{ij}^u(\sigma_{ij}) = \int_0^1 \pi^u(\sigma_{ij}\mid \rho_{ij}) \pi^u(\rho_{ij}) d\rho_{ij}$ be the unconstrained marginal prior density of $\sigma_{ij}$.
	The first term in \eqref{lbound} is bounded below by
	\bea
	&& \prod_{(i,j)\in s(\sg_0) } \pi_{ij}^u  \Big( \sigma_{ij}^* + \sqrt{\frac{2}{3 \tau^4 \tau_0^2 }\frac{s_0 \log p}{p(p-1) n}}\,\,  \Big)  \,\,  2 \sqrt{\frac{2}{3 \tau^4 \tau_0^2 }\frac{s_0 \log p}{p(p-1) n}}  \\
	&\ge&  \left\{  2\pi_{ij}^u  ( 2\tau_0 ) \sqrt{ \frac{2s_0 \log p}{3 \tau^4 \tau_0^2p(p-1) n}}  \right\}^{s_0}  \\
	&\ge&  \exp \Big\{    s_0\log (\pi_{ij}^u  ( 2\tau_0 )  )  -  \frac{1}{2} s_0 \log \Big(  \frac{3 \tau^4 \tau_0^2 p^2 n}{2 s_0 \log p } \Big)  \Big\} \\
	&\ge&  \exp \Big\{    s_0\log (\pi_{ij}^u  ( 2\tau_0 )  )  -  \frac{1}{2} s_0 \log \Big(  \frac{3 \tau^2 p^2 n}{2 } \Big)  \Big\} \\
	&\ge&  \exp \Big\{    -\frac{1}{2}s_0 \log \big( \tau_0^4 \tau^2 n p^4 \big)  -  \frac{1}{2} s_0 \log \Big(  \frac{3 \tau^2 p^2 n}{2 } \Big)   \Big\} \\
	&\ge& \exp \Big\{   -\frac{1}{2}s_0 \log \Big(  \frac{3 \tau_0^4 \tau^4 p^6 n^2}{2 } \Big)      \Big\} \\
	&\ge& \exp \Big\{   -\frac{1}{2}s_0 \log \Big(  C p^{8 + 2/\beta} \Big)      \Big\} \\
	&\ge& \exp \Big\{   -  \big( 5 + \frac{1}{\beta} \big) s_0\log p \Big\}
	\eea
	for some constant $C>0$, because $p \asymp n^{\beta}$, $\log p  /( \tau^4 \tau_0^4  ) \le  n  $, $\pi_{ij}^u(2\tau_0) \ge  \tau_1 /(4 \sqrt{2 \pi^3} \tau_0^2 ) \asymp 1/(\tau_0^2 \tau \sqrt{n} p^{2})$ and $\tau^4 \le p$, by Lemma \ref{lem:horseshoe_tail}.
	Therefore, we have
	\bea
	\pi^u ( A_{n, \sg_0} ) 
	&\ge& \exp \Big\{    - \big( 3 + \frac{1}{2\beta} \big) p \log p  -Cp  -  \big( 5 + \frac{1}{\beta} \big) s_0\log p \Big\} \\
	&\ge& \exp \Big\{   - \big( 5 + \frac{1}{\beta} \big)  (p+s_0)\log p \Big\} \,\,=\,\, \exp \Big\{ - \big( 5 + \frac{1}{\beta} \big)  n\epsilon_n^2  \Big\} . \quad \blacksquare
	\eea 
\end{proof}

\section*{Appendix C: Proof of Theorem \ref{thm:lower}}

\begin{proof}
	We will first show that
	\bean\label{lower1}
	\inf_{\hat{\Sigma}} \sup_{\Sigma_0 \in  B_1 } \bbE_0 \| \hat{\sg} - \sg_0 \|_F^2  &\gtrsim& \frac{s_0 \log p}{n} 
	\eean
	for some $B_1 \subset \mathcal{U}(s_0, \tau_0) $ when $s_0 > 3p$, and show that
	\bean\label{lower2}
	\inf_{\hat{\Sigma}} \sup_{\Sigma_0 \in  B_2 } \bbE_0 \| \hat{\sg} - \sg_0 \|_F^2  &\gtrsim& \frac{p}{n} 
	\eean
	for some $B_2 \subset \mathcal{U}(s_0, \tau_0) $ when $s_0 \le 3p$.

	\paragraph{(i) Proof of \eqref{lower1}}
	Let $r = \lfloor p/2 \rfloor$ and $\epsilon_{np} = \nu \sqrt{\log p /n}$ with $\nu = \sqrt{\epsilon/4}$.
	For any $u \in \bbR^p$, let $A_m(u)$ be a $p\times p$ symmetric matrix whose the $m$th row and column are equal to $u$ and the rest of entries are zero.
	Define a parameter space 
	\bea
	B_1 &:=& \Big\{ \sg(\theta) :  \sg(\theta) = I_p + \epsilon_{np} \sum_{m=1}^r \gamma_m A_m (\lambda_m) , \,\, \theta=(\gamma,\lambda) \in \Theta    \Big\}, 
	\eea
	where  
	$\gamma=(\gamma_1,\ldots, \gamma_r) \in \Gamma = \{0,1\}^r$, $\lambda=(\lambda_1,\ldots, \lambda_r)^T \in \Lambda \subset \bbR^{r \times p}$ and $\Theta = \Gamma \times \Lambda$.
	Here, we let 
	\bea
	\Lambda &:=& \big\{ \lambda= (\lambda_1,\ldots, \lambda_r)^T : \lambda_m =(\lambda_{mi})\in \{0,1\}^p,  \,\,
	\|\lambda_m \|_0 = k ,  \,\, \sum_{i=1}^{p-r}\lambda_{mi}=0   \\
	&& \quad\quad\quad\quad\quad\text{ for any } m=1,\ldots,r, \text{ and satisfies } \max_{1\le i \le p} \sum_{m=1}^r \lambda_{mi} \le 2k    \big\}  ,
	\eea
	$k = \lceil c_{np}/2 \rceil -1$ and $c_{np} = \lceil s_0/p \rceil$.
	
	We will first show that $B_1 \subset \mathcal{U}(s_0, \tau_0) $.
	Note that $\|\sg(\theta)\| \le \|\sg(\theta)\|_1 \le 1 + 2k\epsilon_{np} \le 1 +  c_{np} \nu \sqrt{\log p /n} \le \tau_0$ for any $\tau_0>1$ and sufficiently large $n$  due to our assumption, $s_0^2 (\log p)^3 = O(p^2 n)$ . 
	Also note that $2k \epsilon_{np} \le c_{np} \nu \sqrt{\log p/n} \le (1+ s_0/p) \nu \sqrt{\log p/n} \le 1 -\tau_0^{-1}$ for any $\tau_0>1$ and sufficiently large $n$, which implies that $\sg(\theta) - \tau_0^{-1} I_p$ is diagonally dominant.
	Thus, we have $\lambda_{\min}(\sg(\theta)) \ge \tau_0^{-1}$.
	Because $|s(\sg(\theta))| \le 2k p \le s_0$, it holds that $B_1 \subset \mathcal{U}(s_0, \tau_0) $.

	For given $\theta\in\Theta$ and $a \in \{0,1\}$, let $\bbP_\theta$ and $\bar{\bbP}_{i,a}$ be the joint distribution of random samples $X_1,\ldots, X_n$ from $N_p(0, \sg(\theta) )$ 
	and $\{2^{r-1}|\Lambda| \}^{-1} \sum_{\theta\in \Theta_{i,a} } \bbP_\theta$, respectively, where $\Theta_{i,a} = \{ \theta\in \Theta: \gamma_i(\theta) =a \}$.
	For any two probability measures $\bbP$ and $\mathbb{Q}$, let $\|\bbP \wedge \mathbb{Q}\| = \int (p \wedge q)  d\mu$, where $p$ and $q$ are probability densities corresponding to $\bbP$ and $\mathbb{Q}$, respectively, with respect to a common dominating measure $\mu$.
	By applying Lemma 3 of \cite{cai2012optimal} with $s=2$, we have
	\bea
	\inf_{\hat{\sg}} \max_{\theta\in\Theta} 2^2 \bbE_\theta \| \hat{\sg} - \sg(\theta)\|_F^2 
	&\ge& \alpha \, \frac{r}{2} \, \min_{1\le i \le r} \| \bar{\bbP}_{i,0} \wedge \bar{\bbP}_{i,1}  \|,
	\eea 
	where $\bbE_\theta$ denotes the expectation with respect to $X_1,\ldots, X_n \overset{iid}{\sim} N_p(0, \sg(\theta))$ and
	$$\alpha = \min_{(\theta,\theta'):  H(\gamma(\theta), \gamma(\theta') ) \ge 1 } \|\sg(\theta) - \sg(\theta')\|_F^2/ H(\gamma(\theta), \gamma(\theta') )  .$$
	Here, $H( x, y ) = \sum_{j=1}^r |x_j- y_j|$ for any $x, y \in \{0,1\}^r$.
	By the definition of $\sg(\theta)$,  
	\bea
	\|\sg(\theta)-\sg(\theta')\|_F^2 &\ge& 2k \epsilon_{np}^2 H( \gamma(\theta), \gamma(\theta') )
	\eea
	for any $\theta, \theta' \in \Theta$, which implies 
	\bea
	\alpha  r	&\ge& 2 k  \epsilon_{np}^2 \, r \,\,\ge\,\, \nu^2 \Big( \frac{1}{2}  - \frac{p}{s_0} \Big) \frac{ s_0 \log p}{n} \,\,\asymp\,\,  \frac{ s_0 \log p}{n}
	\eea
	due to $s_0 > 3p$.
	Therefore, we complete the proof if we show that 
	\bean\label{p_12}
	\min_{1\le i \le r} \| \bar{\bbP}_{i,0} \wedge \bar{\bbP}_{i,1}  \| &\ge& c_1
	\eean
	for some constant $c_1 >0$.
	Note that, without loss of generality, it suffices to show that $\| \bar{\bbP}_{1,0} \wedge \bar{\bbP}_{1,1}  \| \ge c_1$.

	Let $\Lambda_1 = \{ \lambda_1(\theta) \in\bbR^p :  \theta\in \Theta \}$ and $\Lambda_{-1} = \{  \lambda_{-1}(\theta) \equiv (\lambda_2(\theta),\ldots, \lambda_r(\theta))^T  \in \bbR^{(r-1) \times p} :  \theta \in \Theta  \}$.
	For any $a\in\{0,1\}$, $b \in \{0,1\}^{r-1}$ and $c \in \Lambda_{-1}$, we define 
	\bea
	\bar{\bbP}_{(1,a,b,c)} &=& \frac{1}{|\Theta_{(1,a,b,c)}| }  \sum_{\theta\in \Theta_{(1,a,b,c)} } \bbP_\theta ,\\
	\Theta_{(1,a,b,c)}  &=& \big\{  \theta\in\Theta: \gamma_1(\theta)=a,  \gamma_{-1}(\theta)=b,   \lambda_{-1}(\theta) =c  \big\}  ,
	\eea
	where $\gamma_{-1}(\theta) \equiv (\gamma_{2}(\theta), \ldots, \gamma_{r}(\theta) ) \in \{0,1\}^{r-1}$ .
	Let $\bbE_{(\gamma_{-1}, \lambda_{-1})} f(\gamma_{-1}, \lambda_{-1})$ be the expectation of $f(\gamma_{-1}, \lambda_{-1})$ over $\Theta_{-1} = \{0,1\}^{r-1} \times \Lambda_{-1}$, i.e., 
	\bea
	\bbE_{(\gamma_{-1}, \lambda_{-1})} f(\gamma_{-1}, \lambda_{-1})
	&=& \frac{1}{2^{r-1} |\Lambda| }  \sum_{(b,c) \in \Theta_{-1} }  |\Theta_{(1,a,b,c)}| f(b, c)  ,
	\eea
	where the probability distribution of $(\gamma_{-1}, \lambda_{-1})$ is induced by the uniform distribution over $\Theta$.
	To show \eqref{p_12}, it suffices to prove that there exists $0<c_2<1$ such that 
	\bean\label{div_c2}
	\bbE_{(\gamma_{-1}, \lambda_{-1})} \Big\{  \int \Big(  \frac{d \bar{\bbP}_{(1,1,\gamma_{-1},\lambda_{-1})} }{d \bar{\bbP}_{(1,0,\gamma_{-1},\lambda_{-1})} }  \Big)^2  d \bar{\bbP}_{(1,0,\gamma_{-1},\lambda_{-1})}   -1 \Big\}  &\le& c_2^2  ,
	\eean
	by Lemma 8 (ii) of \cite{cai2012optimal}.
	
	Since $\bar{\bbP}_{(1,0,\gamma_{-1},\lambda_{-1})}$ assumes that $\gamma_1=0$, this is the distribution function of the $p$-dimensional normal distribution with a zero mean vector and a covariance matrix 
	\bea
	\sg_0 &=& \begin{pmatrix}
		1 & 0_{1\times (p-1)}  \\
		0_{(p-1)\times 1} & S_{(p-1)\times (p-1)}
	\end{pmatrix}  ,
	\eea
	where $S_{(p-1)\times (p-1)} = (s_{ij}) \in \bbR^{(p-1)\times (p-1)}$ is a symmetric matrix uniquely determined by  $(\gamma_{-1},\lambda_{-1})$: 
	\bea
	s_{ij} = \begin{cases}
		1  , \quad i=j, \\
		\epsilon_{np} , \quad \gamma_{i+1} = \lambda_{i+1}(j+1) = 1 ,  \\
		0 , \quad \text{ otherwise}.
	\end{cases}
	\eea
	Let 
	\bea
	\Lambda_1(c) &=& \big\{  a \in \bbR^p :  \lambda_1(\theta)=a , \lambda_{-1}(\theta) =c \,\, \text{ for some } \theta \in\Theta  \big\} 
	\eea
	be the set of all possible values of the first row, $\lambda_1(\theta)$, given the rest of the rows, $\lambda_{-1}(\theta) =c$.
	For a given $\lambda_{-1} \equiv \lambda_{-1}(\theta)  = (\lambda_2(\theta), \ldots, \lambda_r(\theta))^T \in \bbR^{(r-1)\times p}$, denote $n_{\lambda_{-1}}$ be the number of columns of $\lambda_{-1}$ whose sums are equal to $2k$, and let $p_{\lambda_{-1}} = r- n_{\lambda_{-1}}$.
	Then, by the definition of $\Theta$, $p_{\lambda_{-1}}$ is the number of entries in $\lambda_1$ which can be either $0$ or $1$.
	Note that $|\Lambda_1(\lambda_{-1}) | = \binom{p_{\lambda_{-1}}}{k}$ and $p_{\lambda_{-1}} \ge p/4 -1$ for any $\lambda_{-1}$.
	Then, $\bar{\bbP}_{(1,1,\gamma_{-1},\lambda_{-1})}$ is an average of $\binom{p_{\lambda_{-1}}}{k}$ normal distributions with covariance matrices of the form:
	\bean\label{cov_P1}
	\begin{pmatrix}
		1 & r^T    \\
		r & S_{(p-1)\times (p-1)}
	\end{pmatrix},
	\eean
	where $\|r\|_0 = k$ and nonzero entries of $r$ are equal to $\epsilon_{np}$.
	For a given $(\gamma_{-1}, \lambda_{-1})$, let $\sg_1$ and $\sg_2$ be covariance matrices of the form \eqref{cov_P1} with the first row  $\lambda_1 \in \Lambda_1(\lambda_{-1})$ and $\lambda_1' \in \Lambda_1(\lambda_{-1})$, respectively.
	Then, by the similar arguments in page 2411 of \cite{cai2012optimal},
	\bean
	&&  \bbE_{(\gamma_{-1}, \lambda_{-1})}  \Big\{   \int \Big(  \frac{d \bar{\bbP}_{(1,1,\gamma_{-1},\lambda_{-1})} }{d \bar{\bbP}_{(1,0,\gamma_{-1},\lambda_{-1})} }  \Big)^2  d \bar{\bbP}_{(1,0,\gamma_{-1},\lambda_{-1})}   -1   \Big\} \nonumber\\
	&=&  \bbE_{(\gamma_{-1}, \lambda_{-1})}  \left[ \bbE_{(\lambda_1,\lambda_1' )\mid \lambda_{-1} } \Big\{   \exp \big( \frac{n}{2} R_{\lambda_1,\lambda_1'}^{\gamma_{-1},\lambda_{-1} } \big)  -1\Big\}     \right]  \nonumber\\
	&=&  \bbE_{ (\lambda_1,\lambda_1' )   }  \left[ \bbE_{(\gamma_{-1}, \lambda_{-1})\mid (\lambda_1,\lambda_1' )} \Big\{   \exp \big( \frac{n}{2} R_{\lambda_1,\lambda_1'}^{\gamma_{-1},\lambda_{-1} } \big)  -1\Big\}     \right]  ,  \label{cond_exp}
	\eean
	where $\lambda_1,\lambda_1' \mid \lambda_{-1} \overset{iid}{\sim} Unif \{ \Lambda_1(\lambda_{-1}) \} $, $(\gamma_{-1}, \lambda_{-1})\mid (\lambda_1,\lambda_1' ) \sim Unif \{ \Theta_{-1}(\lambda_1, \lambda_1')  \}$,  
	\bea
	\Theta_{-1}(a_1,  a_2) &=&  \{0,1\}^{r-1} \times \big\{  c \in \Lambda_{-1} : \exists \theta_i\in\Theta, i=1,2 \text{ such that } \lambda_1(\theta_i)=a_i , \lambda_{-1}(\theta_i) = c  \big\}   
	\eea
	and
	\bean\label{Rlam}
	R_{\lambda_1,\lambda_1'}^{\gamma_{-1},\lambda_{-1} }  = - \log \det \{ I_p - \sg_0^{-2}(\sg_0- \sg_1)(\sg_0- \sg_2) \}  .
	\eean 
	By Lemma \ref{lem:Rdecomp}, \eqref{cond_exp} is bounded above by
	\bean
	&& \bbE_J \left[  \exp \big\{  -n \log (1- J \epsilon_{np}^2) \big\}   \bbE_{(\lambda_1,\lambda_1') \mid J} \Big\{    \bbE_{(\gamma_{-1}, \lambda_{-1}) \mid (\lambda_1,\lambda_1')} \exp \big(\frac{n}{2} R_{1, \lambda_1,\lambda_1'}^{\gamma_{-1}, \lambda_{-1} } \big)    \Big\}   -1 \right]  \nonumber\\
	&\le& \bbE_J  \Big[  \exp \Big\{   -n \log ( 1- J \epsilon_{np}^2)   \Big\} \, \frac{3}{2}   -1\Big]  ,  \label{Exp_J}
	\eean
	where $J $ is the number of overlapping nonzero entries between the first rows of $\sg_1$ and $\sg_2$, i.e., $J = \lambda_1^T \lambda_1'$.
	Note that for any $0\le j \le k$,
	\bea
	\bbE_J \big\{ I(J = j)  \mid \lambda_{-1}   \big\}  &=&  \frac{\binom{k}{j} \binom{p_{\lambda_{-1}}- k }{k - j}  }{\binom{p_{\lambda_{-1}} }{k} }  \\
	&=&  \Big\{  \frac{k!}{(k-j)!}  \Big\}^2   \frac{\{(p_{\lambda_{-1}} -k)! \}^2 }{p_{\lambda_{-1}}! (p_{\lambda_{-1}} - 2k + j)! } \frac{1}{j!}  \\
	&\le& \Big(  \frac{k^2}{p_{\lambda_{-1}}-k }  \Big)^j ,
	\eea
	because $\lambda_1,\lambda_1' \mid \lambda_{-1} \overset{iid}{\sim} Unif \{ \Lambda_1(\lambda_{-1}) \} $.
	Then, we have 
	\bea
	\bbE_J I(J= j)  &=& \bbE_{\lambda_{-1}} \Big[  \bbE_J \big\{  I(J=j)   \mid \lambda_{-1} \big\}  \Big]  \\
	&\le&  \bbE_{\lambda_{-1}} \Big\{      \Big(  \frac{k^2}{p_{\lambda_{-1}}-k }  \Big)^j  \Big\}    \\
	&\le&  \Big(  \frac{k^2}{ p/4 - 1 -k }  \Big)^j  
	\eea
	because $p_{\lambda_{-1}} \ge p/4 -1$ for any $\lambda_{-1}$.
	Thus, \eqref{Exp_J} is bounded above by
	\bea
	&&  \sum_{j=0}^k \Big(  \frac{k^2}{ p/4 - 1 -k }  \Big)^j    \Big[ \exp \big\{ -n \log ( 1- j \epsilon_{np}^2) \big\} \frac{3}{2}  -1\Big] \\
	&=& \frac{1}{2} + \sum_{j=1}^k \Big(  \frac{k^2}{ p/4 - 1 -k }  \Big)^j    \Big[ \exp \big\{ -n \log ( 1- j \epsilon_{np}^2) \big\} \frac{3}{2}  -1\Big]  \\
	&\le& \frac{1}{2} + \frac{3}{2} \sum_{j=1}^k  \Big(  \frac{k^2}{ p/4 - 1 -k }  \Big)^j p^{2 \nu^2 j }  \\
	&\le& \frac{1}{2} + \frac{3C}{2} \sum_{j=1}^k p^{-\epsilon j} p^{  (\epsilon/2) j }   \\
	&\le& c_2^2
	\eea
	for some constant $C>0$ and all sufficiently large $p$, by setting $c_2^2 = 3/4 <1$, where the second inequality follows from $s_0^2 = O(p^{3-\epsilon})$ and $\nu = \sqrt{\epsilon/4}$.
	This implies \eqref{div_c2}, which completes the proof of \eqref{lower1}.

	\paragraph{(ii) Proof of \eqref{lower2}} 
	Define a parameter space 
	\bea
	B_2  &:=& \Big\{  \sg(\theta):   \sg(\theta) = I_p +  \frac{\nu}{\sqrt{n}} diag(\theta) , \quad \theta\in\Theta = \{0,1\}^p   \Big\}
	\eea
	for some small constant $\nu>0$.
	Then, it is easy to see that $B_2 \subset \calU(s_0, \tau_0)$ for any $\tau_0 >1$ and any sufficiently large $n$.
	By the Assouad lemma in \cite{cai2012optimal}, we have
	\bea
	\inf_{\hat{\sg}} \max_{\sg(\theta)\in B_2 } 2^2 \bbE_{\theta} \| \hat{\sg} - \sg(\theta) \|_F^2 
	&\ge& \min_{H(\theta, \theta') \ge1 } \frac{\|\sg(\theta) - \sg(\theta') \|_F^2 }{H(\theta, \theta') } \, \frac{p}{2} \, \min_{H(\theta, \theta') =1} \| \bbP_\theta \wedge \bbP_{\theta'} \|  .
	\eea
	For any two probability measures $\bbP$ and $\mathbb{Q}$, let $\|\bbP - \mathbb{Q} \|_1 = \int |p-q| d\mu $, where $p$ and $q$ ar probability densities corresponding to $\bbP$ and $\mathbb{Q}$, respectively, with respect to a common dominating measure $\mu$.
	Since $\|\sg(\theta) - \sg(\theta') \|_F^2 = H(\theta,\theta')  \nu^2/n$ and $\| \bbP_\theta \wedge \bbP_{\theta'} \|  = 1 - \| \bbP_\theta - \bbP_{\theta'}\|_1/2$, it suffices to prove that 
	\bean\label{P_L1_ineq}
	\| \bbP_\theta - \bbP_{\theta'}\|_1 &\le& \frac{1}{2}
	\eean
	for any $\theta,\theta' \in \Theta$ such that  $H(\theta,\theta')=1$.
	
	By inequality (C.11) in \cite{lee2018optimal}, we have
	\bea
	\| \bbP_\theta - \bbP_{\theta'}\|_1
	&\le& n \, \Big[  tr\big\{  \sg(\theta') \sg(\theta)^{-1} \big\}  - \log \det  \big\{  \sg(\theta') \sg(\theta)^{-1} \big\} - p  \Big]  \\
	&\equiv&  n \, \Big[  tr \big\{  \sg(\theta)^{-1/2} D_1 \sg(\theta)^{-1/2}  \big\}  - \log \det \big\{  \sg(\theta)^{-1/2} D_1 \sg(\theta)^{-1/2} + I_p   \big\}    \Big]  ,
	\eea
	where $D_1 = \sg(\theta') - \sg(\theta)$.
	Then, by Lemma C.2 of \cite{lee2018optimal}, 
	\bea
	\| \bbP_\theta - \bbP_{\theta'}\|_1 
	&\le& n \, R  \\
	&\le& n \, c \| D_1 \sg(\theta)^{-1} \|_F^2  \,\,\le \,\, c \nu^2   
	\eea
	for some constant $c>0$ and any $\theta,\theta' \in \Theta$ such that  $H(\theta,\theta')=1$.
	Therefore, by taking $\nu^2 = 1/(2c)$, it shows that \eqref{P_L1_ineq} holds.  \hfill $\blacksquare$	
\end{proof}

\begin{lemma}\label{lem:Rdecomp}
	If $s_0^2 (\log p)^3 = O(p^2 n)$ and $s_0^2 = O(p^{3-\epsilon})$ for some small constant $\epsilon>0$, then  
	\bean\label{R_equality}
	R_{\lambda_1,\lambda_1'}^{\gamma_{-1},\lambda_{-1} } 
	&=& - 2 \log ( 1- J \epsilon_{np}^2 ) + R_{1,\lambda_1,\lambda_1'}^{\gamma_{-1},\lambda_{-1} } ,
	\eean
	where $R_{\lambda_1,\lambda_1'}^{\gamma_{-1},\lambda_{-1} } $ is defined in \eqref{Rlam}, and $R_{1,\lambda_1,\lambda_1'}^{\gamma_{-1},\lambda_{-1} } $ satisfies 
	\bean\label{EEupper}
	\bbE_{ (\lambda_1,\lambda_1' ) \mid J  } \left[  \bbE_{(\gamma_{-1}, \lambda_{-1}) \mid (\lambda_1,\lambda_1')} \Big\{   \exp \big( \frac{n}{2} R_{1,\lambda_1,\lambda_1'}^{\gamma_{-1},\lambda_{-1} }   \big)   \Big\}  \right]   &\le& \frac{3}{2}  
	\eean
	uniformly over all $J$.
\end{lemma}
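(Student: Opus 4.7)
The proof splits naturally into (a) establishing the algebraic identity \eqref{R_equality} by giving $R_1$ explicitly, and (b) bounding its conditional exponential moment in \eqref{EEupper}.

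For (a), I would exploit the block structure induced by $\gamma_1=0$. Writing $\sg_0 = \mathrm{diag}(1,S)$ with $S \in \bbR^{(p-1)\times(p-1)}$, each perturbation $\sg_0 - \sg_j$ lives in the off-diagonal $1\times(p-1)$ block, so a direct multiplication gives
\bea
\sg_0^{-2}(\sg_0-\sg_1)(\sg_0-\sg_2) &=& \epsilon_{np}^2 \begin{pmatrix} J & 0 \\ 0 & S^{-2}\tilde\lambda_1 \tilde\lambda_2^T \end{pmatrix},
\eea
where $\tilde\lambda_j \in \{0,1\}^{p-1}$ records the support of the first row of $\sg_j$. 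The scalar block contributes the factor $1 - J\epsilon_{np}^2$, and the matrix determinant lemma applied to the rank-one block contributes $1 - \epsilon_{np}^2\tilde\lambda_2^T S^{-2}\tilde\lambda_1$. Combining these and subtracting $-2\log(1-J\epsilon_{np}^2)$ yields
\bea
R_{1,\lambda_1,\lambda_1'}^{\gamma_{-1},\lambda_{-1}} &=& -\log\!\left(1 - \frac{\epsilon_{np}^2\,\tilde\lambda_2^T(S^{-2} - I_{p-1})\tilde\lambda_1}{1 - J\epsilon_{np}^2}\right),
\eea
which vanishes when $S=I_{p-1}$.

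For (b), the key quantitative fact is that $S = I_{p-1} + \epsilon_{np}T$, where $T$ is a symmetric $0/1$-valued matrix with at most $2k$ nonzeros per row (by the column-sum constraint in $\Lambda$), so $\|T\| \le 2k \asymp s_0/p$. The assumption $s_0^2(\log p)^3 = O(p^2 n)$ gives $\epsilon_{np}\|T\| = o(1/\log p)$, validating the Neumann expansion $S^{-2} - I_{p-1} = -2\epsilon_{np}T + O(\epsilon_{np}^2\|T\|^2)$ in operator norm. Combined with $-\log(1-z) = z + O(z^2)$ for small $|z|$, this reduces the problem to bounding the exponential moment of the leading term $-n\epsilon_{np}^3\tilde\lambda_2^T T \tilde\lambda_1/(1-J\epsilon_{np}^2)$, with the higher-order remainder contributing only a multiplicative $1+o(1)$ factor.

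The leading term is handled by a factorization argument: since $T = \sum_{m=2}^r \gamma_m B_m(\lambda_m)$ is linear in $\gamma_{-1}$, conditioning on $\lambda_{-1}$ makes the exponential moment factorize across $m$ using independence of $\gamma_m \sim \mathrm{Bernoulli}(1/2)$, and each $\cosh$-type factor is controlled by $\cosh(x) \le \exp(x^2/2)$. Taking the remaining expectation over $\lambda_{-1}$ reduces to controlling $\sum_m \mathbb{E}_{\lambda_{-1}} X_m^2$, which is far smaller than the worst-case bound $4rk^2$ because each $\lambda_{mj}$ has mean of order $k/r$. The main obstacle is precisely this moment computation: the column-sum constraint $\max_i \sum_m \lambda_{mi} \le 2k$ couples the rows of $\lambda$ and blocks a direct product argument. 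I would circumvent this by coupling the constrained uniform law on $\Lambda$ to a product of row-uniform laws, which are close in total variation because the constraint is active only for a vanishing fraction of configurations when $k^2 \ll r$, following the bookkeeping in \cite{cai2012optimal}. Combining all pieces yields the uniform bound $1 + o(1) \le 3/2$ for large $n$.
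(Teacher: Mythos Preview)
Your derivation in part~(a) is correct and matches what the paper obtains by citing equation~(60) of \cite{cai2012optimal}: the block structure of $\sg_0=\diag(1,S)$ together with the matrix determinant lemma gives exactly the explicit expression you write for $R_{1,\lambda_1,\lambda_1'}^{\gamma_{-1},\lambda_{-1}}$.

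Part~(b), however, has a genuine gap. By the definition of $\Lambda$, every $\lambda_m$ satisfies $\sum_{i\le p-r}\lambda_{mi}=0$, so $\tilde\lambda_1,\tilde\lambda_2$ are supported in the last $r$ coordinates, while $T=(S-I_{p-1})/\epsilon_{np}$ has nonzeros only at positions $(m,\ell)$ and $(\ell,m)$ with $m\in\{2,\dots,r\}$ and $\ell\in\{p-r+1,\dots,p\}$. Since $r<p-r+1$, the block of $T$ indexed by $\{p-r+1,\dots,p\}\times\{p-r+1,\dots,p\}$ is identically zero, and hence your ``leading term'' $\tilde\lambda_2^T T\tilde\lambda_1$ vanishes for every configuration. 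The first surviving contribution in the Neumann expansion is
\[
3\epsilon_{np}^2\,\tilde\lambda_2^T T^2\tilde\lambda_1 \,=\, 3\epsilon_{np}^2\sum_{m\ge 2}\gamma_m\,\langle\lambda_m,\lambda_1\rangle\,\langle\lambda_m,\lambda_1'\rangle,
\]
which is a \emph{nonnegative} linear combination of the $\gamma_m$'s. Averaging over $\gamma_m\sim\mathrm{Bernoulli}(1/2)$ therefore produces no centering, and the $\cosh(x)\le e^{x^2/2}$ step gives nothing beyond the trivial bound obtained by setting all $\gamma_m=1$; the second-moment program you sketch does not control what is actually needed.

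The paper proceeds quite differently. It does not expand $S^{-2}$; instead, following the supplement of \cite{cai2012optimal}, it bounds $\exp(\tfrac{n}{2} R_1)$ by $\exp\{Cn\max(\|A_*\|_1,\|A_*\|_\infty)\}$ with $A_*=(I_p-\sg_0)(\sg_0-\sg_1)(\sg_0-\sg_2)$, invokes the combinatorial tail estimate
\[
\bbP\big\{\max(\|A_*\|_1,\|A_*\|_\infty)\ge 2tk\epsilon_{np}^3\big\}\ \le\ 2p\Big(\frac{k^2}{p/8-1-k}\Big)^{t-1},
\]
and then integrates the exponential moment against this tail via a layer-cake argument. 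The two hypotheses $s_0^2(\log p)^3=O(p^2n)$ and $s_0^2=O(p^{3-\epsilon})$ enter precisely to make the deterministic contribution at the threshold $t=(1+2\epsilon)/\epsilon$ bounded by $3/2$ and the integrated tail $o(1)$, respectively.
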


\begin{proof}[Proof of Lemma \ref{lem:Rdecomp}]
	Note that Lemma \ref{lem:Rdecomp} corresponds to Lemma 11 of \cite{cai2012optimal}.
	Equation \eqref{R_equality} follows from equation (60) of \cite{cai2012optimal}.	
	However, to obtain \eqref{EEupper}, \cite{cai2012optimal} assumed $p \ge n^{\beta}$ for some $\beta>1$ and $n k \epsilon_{np}^3$ is sufficiently small for all large $n$.
	We will show that one can still prove that \eqref{EEupper} holds under the conditions in Lemma \ref{lem:Rdecomp}.
	
	Let $A_* = (I_p - \sg_0)(\sg_0-\sg_1) (\sg_0 - \sg_2)$, then by the same arguments used in pages 3-5 of Supplementary of \cite{cai2012optimal}, one can show that 
	\bea
	&& \bbE_{ (\lambda_1,\lambda_1' ) \mid J  } \left[  \bbE_{(\gamma_{-1}, \lambda_{-1}) \mid (\lambda_1,\lambda_1')} \Big\{   \exp \big( \frac{n}{2} R_{1,\lambda_1,\lambda_1'}^{\gamma_{-1},\lambda_{-1} }   \big)   \Big\}  \right]  \\
	&\le& \bbE_{ (\lambda_1,\lambda_1' ) \mid J  } \left[  \bbE_{(\gamma_{-1}, \lambda_{-1}) \mid (\lambda_1,\lambda_1')} \Big\{   \exp \big(  C n \, \max \{ \|A_*\|_1, \|A_* \|_\infty \}  \big)   \Big\}  \right]  
	\eea
	for some constant $C>0$, and  
	\bea
	\bbE_{ (\lambda_1,\lambda_1' ) \mid J  } \left(  \bbE_{(\gamma_{-1}, \lambda_{-1}) \mid (\lambda_1,\lambda_1')} \Big[   I \Big\{  \max (\|A_*\|_1 , \|A_*\|_\infty ) \ge 2 t \, k \epsilon_{np}^3   \Big\}  \Big] \right)
	&\le& 2p \Big(  \frac{k^2 }{p/8 -1 -k} \Big)^{t-1}
	\eea
	for every $t>2$.
	Thus,
	\bean
	&&  \bbE_{ (\lambda_1,\lambda_1' ) \mid J  } \left[  \bbE_{(\gamma_{-1}, \lambda_{-1}) \mid (\lambda_1,\lambda_1')} \Big\{   \exp \big(  C n \, \max \{ \|A_*\|_1, \|A_* \|_\infty \}  \big)   \Big\}  \right]  \nonumber \\
	&\le& a + \int_{x>a}  \bbE_{ (\lambda_1,\lambda_1' ) \mid J  } \left(  \bbE_{(\gamma_{-1}, \lambda_{-1}) \mid (\lambda_1,\lambda_1')} \Big[  I \Big\{  \exp \big(  C n \, \max \{ \|A_*\|_1, \|A_* \|_\infty \}  \big)  > x  \Big\}  \Big] \right)  dx \nonumber \\
	&\le& \exp \Big( \frac{1+2\epsilon}{\epsilon} \, 2 C n k\epsilon_{np}^3 \Big)
	+ \int_{t > (1+2\epsilon)/\epsilon } 2C n k \epsilon_{np}^3 \exp \Big(  2 C t n k\epsilon_{np}^3 \Big)  \, 2p \Big(  \frac{k^2 }{p/8 -1 -k} \Big)^{t-1} dt \nonumber\\
	&\le& \exp \Big( \frac{1+2\epsilon}{\epsilon} \, 2 C n k\epsilon_{np}^3 \Big) \label{upper_term1}\\
	&+&  \int_{t > (1+2\epsilon)/\epsilon }  \exp \Big\{  \log (2p) - (t-1) \log \frac{p/8 -1 -k}{k^2 }   + 2C (t+1) n k \epsilon_{np}^3 \Big\}  dt  ,\label{upper_term2}
	\eean
	where the second inequality follows by choosing $a = \exp \{ 2C n k\epsilon_{np}^3 (1+2\epsilon)/\epsilon \}$.
	Since $k = \lceil c_{np}/2 \rceil -1$, $c_{np} = \lceil s_0/p \rceil$, $\epsilon_{np} = \nu \sqrt{\log p /n}$ with $\nu = \sqrt{\epsilon/4}$ and we assume that $s_0^2(\log p)^3 = O(p^2 n)$, term \eqref{upper_term1} is less than $3/2$ for any sufficiently small $\epsilon>0$.
	Thus, we complete the proof if we show that term \eqref{upper_term2} is of order $o(1)$.
	Note that 
	\bea
	(t-1) \log \frac{p/8 -1 -k}{k^2 } 
	&\ge& \Big( 1+ \frac{1}{\epsilon} \Big)  \log \frac{p/8 -1 -k}{k^2 }  \\
	&\ge&  \Big( 1+ \frac{1}{\epsilon} \Big)  \log \frac{p^3/8 -p^2 -p s_0}{ s_0^2 } + C' \\
	&=& \Big( 1+ \frac{1}{\epsilon} \Big)  \log \Big\{ \frac{p^3}{s_0^2} \Big( \frac{1}{8} - \frac{1}{p} - \frac{s_0}{p^2}   \Big) \Big\}  +C' \\
	&\ge& \Big( 1+ \frac{1}{\epsilon} \Big)  \log ( p^\epsilon )  + C'' \\
	&=& (1+\epsilon) \log p + C'' ,
	\eea 
	for any $t>(1+2\epsilon)/\epsilon$ and some constnats $C'>0$ and $C''>0$.
	The third inequality follows from the assumption $s_0^2 = O(p^{3-\epsilon})$.
	Therefore, it implies that \eqref{upper_term2} is of order $o(1)$, which gives the desired result. 	\hfill $\blacksquare$
\end{proof}

\section*{Appendix D: Full Conditionals}
The joint posterior distribution of $\boldsymbol{\Sigma}$ and $\boldsymbol{\rho} = (\rho_{jk})$ with shrinkage priors \eqref{sig_jk}, \eqref{rho_jk}, and \eqref{sig_jj} is proportional to
$$
	|\boldsymbol{\Sigma}| ^{-n/2}\exp\left\{-\frac{1}{2}tr(S\boldsymbol{\Sigma}^{-1})\right\}\prod_{j < k}\left[\exp\left\{- \frac{\sigma_{jk}}{2\tau_1^2}\left(\frac{1 - \rho_{jk}}{\rho_{jk}}\right) \right\}\rho_{jk}^{a-1}(1 - \rho_{jk})^{b-1}\right]\prod_{j=1}^p\exp\left\{-\frac{\lambda}{2}\sigma_{jj} \right\},
$$
and under partitions \eqref{eq:partitions} and the transformation \eqref{eq:changev}, the joint conditional posterior of $\boldsymbol{u}$ and $v$ given $\boldsymbol{\rho}$ \citep{wang15} is
\begin{eqnarray*}
	\pi\left(\boldsymbol{u}, v \mid \boldsymbol{\rho}, \boldsymbol{X}_n \right) \propto \exp\Bigg\{-\frac{1}{2}\big(n\log(v) + \boldsymbol{u}^\top \boldsymbol{\Sigma}_{11}^{-1}\boldsymbol{S}_{11}\boldsymbol{\Sigma}_{11}^{-1}\boldsymbol{u}v^{-1} - 2\boldsymbol{s}_{12}^\top\boldsymbol{\Sigma}_{11}^{-1}\boldsymbol{u}v^{-1} + s_{22}v^{-1} &&\\
	 + \boldsymbol{u}^\top \boldsymbol{D}^{-1}\boldsymbol{u} + \lambda \boldsymbol{u}^\top \boldsymbol{\Sigma}_{11}^{-1}\boldsymbol{u} + \lambda v\big)\Bigg\}, &&
\end{eqnarray*}
where $\boldsymbol{D} = \diag(\boldsymbol{v}_{12})$. 

\noindent This gives the full conditional posteriors of $\boldsymbol{u}$ and $v$ as follows \citep{wang15}:
\begin{eqnarray*}
	\pi(\boldsymbol{u} \mid v, \boldsymbol{\rho}, \boldsymbol{X}_n) &=& N_{p-1} \left[\left\{\bsB + \boldsymbol{D}^{-1}\right\}^{-1}\bsw, \left\{\bsB + \boldsymbol{D}^{-1}\right\}^{-1}\right], \\
	 \pi(v \mid \boldsymbol{u}, \boldsymbol{\rho}, \boldsymbol{X}_n) &=& GIG \left(1 - n/2, \,\, \lambda, \,\, \bsu^T\Sigma_{11}^{-1}\bsS_{11}\Sigma_{11}^{-1}\bsu - 2s_{12}^T\Sigma_{11}^{-1}\bsu + s_{22}\right),
\end{eqnarray*}
where $\bsB = \bsSig_{11}^{-1}\bsS_{11}\bsSig_{11}^{-1}v^{-1} + \lambda\bsSig_{11}^{-1}$ and $\bsw = \bsSig_{11}^{-1}\bss_{12}v^{-1}$.

\noindent Finally, to dervie the full conditional of $\boldsymbol{\rho}$, we consider a reparametrization of $\rho_{jk}$ as 
$$\phi_{jk} = \frac{\rho_{jk}}{1 - \rho_{jk}},$$
then the shrinkage prior can be represented as follows \citep{armagan11}:
$$
	\sigma_{jk} \mid \phi_{jk} \sim N(0, \phi_{jk}\tau_1^2), \,\,\, \phi^{1/2}_{jk} \sim C^+(0, 1),
$$
where $C^+(0, 1)$ denotes a half-Cauchy distribution on $(0, \infty)$. The full conditional distribution of $\phi_{jk}$ with an additional parameter $\psi_{jk}$ \citep{carvalho2010horseshoe} is given as
\begin{eqnarray*}
	\pi(\psi_{jk} \mid \phi_{jk}, \bsSig, \bsX_n) &=& Gamma(a + b, \phi_{jk} + 1), \\
	\pi(\phi_{jk} \mid \psi_{jk}, \bsSig, \bsX_n) &=& GIG(a - 1/2, 2\psi_{jk}, \sigma_{jk}^2/\tau_1^2).
\end{eqnarray*}

\bibliographystyle{elsarticle-harv}
\bibliography{sparseCOV}

\end{document}